 \def\@textbottom{\vskip \z@ \@plus 11pt}
 \let\@texttop\relax
\renewcommand\l@subsection{\@tocline{2}{0pt}{2pc}{5pc}{}}
\newcounter{mtheorem} 
\newcounter{mcorollary} 
\newtheorem{theorem}{Theorem}
\newtheorem*{theorem*}{Theorem}
\newtheorem*{observation*}{Observation}
\newtheorem{observation}[theorem]{Observation}
\newtheorem{corollary}[theorem]{Corollary}
\newtheorem{proposition}[theorem]{Proposition}
\newtheorem{lemma}[theorem]{Lemma}
\newtheorem*{klemma*}{Key Lemma}
\newtheorem{question}[theorem]{Question}
\newtheoremstyle{ourremark}
  {3pt}
  {3pt}
  {}
  {}
  {\bfseries}
  {}
  {.5em}
  {}
\theoremstyle{ourremark}
\newtheorem{remark}[theorem]{Remark}
\newtheorem{definition}[theorem]{Definition}
\newtheorem{example}[theorem]{Example}
\newlength{\dhatheight} 
\newcommand{\no}{\noindent}
\newcommand{\bex}{\begin{example}\em}
\newcommand{\eex}{\end{example}}
\def\R{\mathbb{R}}
\def\Z{\mathbb{Z}}
\numberwithin{equation}{section} 
\numberwithin{theorem}{section}
\title[Borsuk conjecture and homotopy domination]{On the Borsuk conjecture concerning\\
homotopy domination.} 
\date{\today}
\author{R. Komendarczyk}\thanks{The first author acknowledges the support of  DARPA YFA N66001-11-1-4132 and NSF DMS 1043009.}
\author{S. Kwasik}\thanks{The second author acknowledges the support of the Simons Foundation Grant No. 281810}
\address{Department of Mathematics,
Tulane University,
New Orleans, United States } 
\email{rako@tulane.edu, kwasik@tulane.edu}
\author{W. Rosicki}
\address{Faculty of Mathematics,
University of Gdansk, 
Gdansk, Poland} 
\email{wrosicki@mat.ug.edu.pl}
\subjclass[2010]{Primary: 55P55; Secondary: 55P15, 54C56}	 	
\keywords{ANR spaces, homotopy domination, homotopy type.}
\begin{document}
\setcounter{section}{0}

\begin{abstract}
 In the seminal monograph {\em Theory of retracts}, Borsuk raised the following question: suppose  two compact ANR spaces are $h$--equal, i.e. mutually homotopy dominate each other, are they homotopy equivalent? The current paper approaches this question in two ways. On one end, we provide conditions on the fundamental group which guarantee a positive answer to the Borsuk question.  On the other end, we construct various examples of compact $h$--equal, not homotopy equivalent continua, with distinct properties. The first class of these examples has trivial all known algebraic invariants (such as homology, homotopy groups etc.) The second class is given by  $n$--connected  continua, for any $n$, which are infinite $CW$--complexes, and hence ANR spaces, on a complement of a point.  
\end{abstract}

\maketitle

\section{Introduction}\label{sec:intro}
Given two topological spaces $X$ and $Y$, $X$ is {\em homotopy dominated} by $Y$; denoted by $X\leq_h Y$, if and only if there exist maps $f:X\longrightarrow Y$ and $g:Y\longrightarrow X$,
such that $g\circ f\simeq \text{id}_X$. If $X\leq_h Y$ and $Y\leq_h X$, the spaces $X$ and $Y$ are called $h$--{\em equal}, the latter denoted by $X=_h Y$. In particular if $X$ is homotopy equivalent to $Y$, i.e. $X\simeq Y$, then they are $h$--equal. In the homotopy theory of Borsuk's ANR spaces, c.f. \cite{Borsuk67}, two basic problems are raised. Paraphrasing  Borsuk \cite{Borsuk67}, the first one can be stated as follows:
\begin{quote}
{\em  1) Is every compact ANR space homotopy equivalent to a finite CW-complex?}
\end{quote}
and the second one:
\begin{quote}
{\em 2) Are two $h$--equal compact ANR spaces homotopy equivalent? In other words, given compact ANR spaces $X$ and $Y$, does $X=_h Y$ imply $X\simeq Y$?}
\end{quote}
\no Both questions become less challenging if the compactness condition is relaxed, since the answer to the first question is positive \cite{Milnor59}, and negative for the second one, \cite{Stewart58}.  {\em Problem 1} (with the compactness assumption) became known as the {\em Borsuk conjecture} and attracted a considerable interest (c.f. \cite{Milnor59, Kirby-Siebenmann77, Quinn79, Bryant-Ferry-Mio-Weinberger96}) which culminated in the positive solution by West in \cite{West77}. In contrast, for the second question surprisingly little progress has been made over the years. 
 One of the goals of the current paper is to renew interest in  {\em Problem 2}. 

The paper consists of essentially two parts. In the first part, which is mostly of expository nature, we make some comments on the role of the fundamental group in {\em Problem 2}. By analogy to Hopfian groups, we define a notion of a {\em Hopfian pair} for $h$--equal spaces and make the following observation.
\begin{observation}\label{thm:hopfian}
The pair of ANR spaces $X$, $Y$ is a Hopfian pair, if and only if, $X$ and $Y$ are homotopy equivalent.
\end{observation}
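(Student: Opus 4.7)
The plan is a direct verification of the iff by unwinding the definition of a Hopfian pair, which is modelled on the Hopfian condition for groups: every surjective endomorphism is an automorphism, equivalently, every one-sided inverse is two-sided. Accordingly, I expect the Hopfian-pair axiom to be exactly the upgrade that promotes the one-sided $h$-equality data into a full homotopy equivalence, so the observation should be essentially a formal consequence of the definitions.

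For the direction $X \simeq Y \;\Rightarrow\; (X,Y)$ is a Hopfian pair, I would take a homotopy equivalence $\phi\colon X \to Y$ with homotopy inverse $\psi\colon Y \to X$ and use the single pair $(\phi,\psi)$ to realise both dominations $X \leq_h Y$ and $Y \leq_h X$ simultaneously. The defining property of a Hopfian pair then holds tautologically because $\psi\phi \simeq \text{id}_X$ and $\phi\psi \simeq \text{id}_Y$ are both available on the nose.

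For the converse, I would start from the $h$-equality data $f\colon X \to Y$, $g\colon Y \to X$ with $gf \simeq \text{id}_X$ and $f'\colon Y \to X$, $g'\colon X \to Y$ with $g'f' \simeq \text{id}_Y$. The Hopfian-pair hypothesis---the topological counterpart of ``surjective implies bijective''---should upgrade one of the given one-sided homotopies into a two-sided one, e.g.\ forcing $ff' \simeq \text{id}_Y$. A short diagram chase then yields $g \simeq g \cdot \text{id}_Y \simeq g(ff') \simeq (gf)f' \simeq f'$, whence $fg \simeq ff' \simeq \text{id}_Y$; combined with $gf \simeq \text{id}_X$ this exhibits $f$ as a homotopy equivalence with inverse $g$, so $X \simeq Y$. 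The main (and essentially only) non-formal step is translating the Hopfian-pair condition into that two-sided relation; once it is in hand the rest is a routine chase in the homotopy category, and neither Whitehead's theorem nor the ANR hypothesis is needed beyond allowing us to work with homotopy classes of maps throughout.
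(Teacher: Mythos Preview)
Your argument rests on a misreading of the definition of a Hopfian pair. In this paper, the Hopfian-pair condition is \emph{algebraic}: with the maps $f\colon X\to Y$, $g\colon Y\to X$ (and sections $i,j$) as in \eqref{eq:XY-dominated}, the hypothesis is only that the epimorphism $(g\circ f)_\ast$ (equivalently $(f\circ g)_\ast$) on $\pi_1$ and on the homology modules $H_\ast(\,\cdot\,;\Lambda)$, $\Lambda=\Z[\pi]$, is an isomorphism. It does \emph{not} assert, or directly yield, a relation of the form $ff'\simeq\text{id}_Y$ in the homotopy category. The step you flag as ``the main non-formal step'' --- translating the Hopfian-pair condition into such a two-sided homotopy --- is precisely the content of the observation, and it cannot be done by a formal chase: one has to pass from algebraic isomorphisms back to a homotopy equivalence.

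That passage is exactly where Whitehead's theorem and the ANR hypothesis enter. The paper's proof first observes that if $(g\circ f)_\ast$ is an isomorphism then $f_\ast$ and $g_\ast$ are monomorphisms on $\pi_1$ and on $H_\ast(\,\cdot\,;\Lambda)$; since they are already epimorphisms (by the domination), they are isomorphisms. Then, because ANR spaces have the homotopy type of CW--complexes, Whitehead's theorem (in its local-coefficient form) applies and shows $f$ and $g$ are homotopy equivalences. Your claim that ``neither Whitehead's theorem nor the ANR hypothesis is needed'' is therefore incorrect: without them there is no mechanism to promote the algebraic isomorphisms to a homotopy equivalence, and the forward direction of the observation would fail for general spaces.
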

This observation is in essence a reformulation of the classical Whitehead theorem, but it helps to put  the Borsuk problem ({\em Problem 2}) in a proper perspective. In particular, it yields the following consequence
\begin{theorem}\label{thm:noetherian}
Suppose $X$, and $Y$ are $h$--equal ANR spaces, such that $\pi_1(X)$ or $\pi_1(Y)$ is Hopfian, where $X$ is compact, or more generally has finitely generated homology groups $H_k(X)$ for all $k$. If one of the group rings $\Lambda$ is a Noetherian ring, then $X$ and $Y$ is a Hopfian pair, and hence $X$ and $Y$ are homotopy equivalent.
\end{theorem}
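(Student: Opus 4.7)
The plan is to use Observation~\ref{thm:hopfian} to reduce the conclusion to exhibiting a homotopy equivalence between $X$ and $Y$. Fix the $h$-equality data $f\colon X\to Y,\ g\colon Y\to X$ with $g\circ f\simeq\mathrm{id}_X$, and $f'\colon Y\to X,\ g'\colon X\to Y$ with $g'\circ f'\simeq\mathrm{id}_Y$; on any functorial invariant these give two one-sided splittings. The strategy is to promote the splittings to genuine isomorphisms, first on $\pi_1$, then on the homology of universal covers, and finally to conclude via Whitehead's theorem (which applies because ANRs have CW homotopy type).

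For the $\pi_1$ step, the composition $g_\ast\circ g'_\ast\colon \pi_1(X)\twoheadrightarrow\pi_1(X)$ is a surjective endomorphism of the Hopfian group $\pi_1(X)$ (or, symmetrically, $g'_\ast\circ g_\ast$ if $\pi_1(Y)$ is the Hopfian one), hence an isomorphism. A short diagram chase on surjections makes each of $g_\ast,g'_\ast$ bijective, and the splitting relations $g_\ast f_\ast=\mathrm{id}$, $g'_\ast f'_\ast=\mathrm{id}$ then force $f_\ast,f'_\ast$ to be bijective too. I would identify $\pi_1(X)\cong\pi_1(Y)\cong G$, so that $\Lambda=\mathbb{Z}[G]$ is unambiguous.

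Next I would lift the four maps to universal covers and repeat the formal structure of the $\pi_1$ step in the category of $\Lambda$-modules, applied to $H_k(\tilde X)=H_k(X;\Lambda)$. Then $g_\ast\circ g'_\ast$ is a surjective $\Lambda$-endomorphism of $H_k(\tilde X)$. When $X$ is compact, West's theorem identifies $X$ up to homotopy with a finite CW complex, whose cellular chain complex on the universal cover is a finitely-generated free $\Lambda$-complex; hence $H_k(\tilde X)$ is finitely generated over $\Lambda$. In the more general case in which only $H_k(X;\mathbb{Z})$ is assumed finitely generated, the same conclusion is the pressure point of the argument and I expect it to be the main obstacle: one must leverage Noetherianness of $\Lambda$ to transfer finite generation from $\mathbb{Z}$ to $\Lambda$. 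Once finite generation is in hand, the standard fact that a surjective endomorphism of a finitely-generated module over a Noetherian ring is an isomorphism (iterated kernels form an ascending chain that must stabilise) forces $g_\ast g'_\ast$ to be an isomorphism; the $\pi_1$-chase then promotes each of $g_\ast,g'_\ast,f_\ast,f'_\ast$ to isomorphisms on $H_k(\tilde X)$ in every degree. Consequently $f$ induces an isomorphism on $\pi_1$ and its lift $\tilde f\colon\tilde X\to\tilde Y$ is a simply-connected integral-homology equivalence between CW-type spaces; Whitehead's theorem promotes $\tilde f$, and hence $f$, to a homotopy equivalence, so by Observation~\ref{thm:hopfian}, $(X,Y)$ is a Hopfian pair and $X\simeq Y$.
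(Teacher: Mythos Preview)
Your argument follows the same line as the paper's: use the Hopfian hypothesis on $\pi_1$ together with the fact that finitely generated modules over a Noetherian ring are Hopfian to upgrade the surjective endomorphism $(g\circ f)_\ast$ (in your notation $g_\ast g'_\ast$) on $\pi_1$ and on $H_\ast(\,\cdot\,;\Lambda)$ to an isomorphism, and then invoke Observation~\ref{thm:hopfian}---which is precisely the Whitehead step you spell out at the end, so your closing appeal to it is redundant rather than an additional ingredient. The paper's proof is a two-sentence compression of exactly this and, like yours, does not separately justify finite generation of $H_\ast(X;\Lambda)$ in the ``more general'' (non-compact, $H_k(X;\Z)$ finitely generated) case; the pressure point you flag is real and is simply asserted in the paper as well.
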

The following corollary is well known to the experts \cite{Kwasik84, Hausmann:1987, Kolodziejczyk05} in this research area:
\begin{corollary}\label{cor:polycyclic}
 Suppose $X$ and $Y$ are compact, $h$--equal ANR spaces with the polycyclic-by-finite fundamental groups, then $X$ and $Y$ is a Hopfian pair.
\end{corollary}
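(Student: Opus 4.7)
The plan is to reduce the corollary directly to Theorem \ref{thm:noetherian} by verifying its three hypotheses for compact ANR spaces with polycyclic-by-finite fundamental groups. Let $\pi = \pi_1(X)$, which is polycyclic-by-finite (equivalently, virtually polycyclic). I would organize the verification into three short steps, each invoking a classical result.

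First, I would check that $\pi$ is Hopfian. Polycyclic-by-finite groups are finitely generated, and by a theorem of Hirsch they are residually finite; combining this with Mal\cprime cev's theorem that every finitely generated residually finite group is Hopfian, it follows that $\pi$ (and likewise $\pi_1(Y)$) is Hopfian. So the fundamental group hypothesis of Theorem \ref{thm:noetherian} is automatic.

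Second, I would invoke Philip Hall's theorem that the integral group ring $\Lambda = \mathbb{Z}[\pi]$ of a polycyclic-by-finite group is (left and right) Noetherian. The argument proceeds by induction on the Hirsch length, using the fact that if $1\to N\to G\to C\to 1$ is exact with $C$ infinite cyclic or finite and $\mathbb{Z}[N]$ is Noetherian, then $\mathbb{Z}[G]$ is Noetherian via a Hilbert-basis-style argument for skew (Laurent) polynomial rings. This gives the Noetherian hypothesis of Theorem \ref{thm:noetherian}.

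Third, since $X$ is a compact ANR, West's theorem (cited in the introduction as the resolution of Problem 1) implies $X$ has the homotopy type of a finite CW complex; in particular $H_k(X)$ is finitely generated for every $k$, so the finiteness condition on homology in Theorem \ref{thm:noetherian} holds. Applying that theorem now yields that $(X,Y)$ is a Hopfian pair, and hence by Observation \ref{thm:hopfian} that $X\simeq Y$. There is no genuine obstacle in the argument beyond invoking the right results; the substance of the corollary is entirely packaged inside Hall's Noetherianness theorem and the Hirsch--Mal\cprime cev residual finiteness/Hopficity theorems, both of which are standard in this area and account for why the corollary is, as noted, folklore among experts.
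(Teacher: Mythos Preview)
Your proposal is correct and follows essentially the same route as the paper: verify that polycyclic-by-finite groups are Hopfian and have Noetherian integral group rings, then invoke Theorem \ref{thm:noetherian}. The paper's proof is terser---it simply asserts these two facts without naming Hirsch, Mal\cprime cev, or Hall---but the logical structure is identical, and your additional justification for the finite-generation of $H_k(X)$ via West's theorem is a welcome expansion of what the paper leaves implicit in the compactness hypothesis.
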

In Section \ref{sec:pi_1-role}, we also make several related observations in the context of Hopfian pairs, Poincar\'e complexes and $H$--spaces.

The second part, the main part of the paper,  is where we construct  $2$--dimensional continua which are $h$--equal but not homotopy equivalent, see Theorem \ref{thm:WH-WBH}, these constructions are inspired by \cite{Karimov-Repovs-Rosicki-Zastrow05} and \cite{Stewart58}. A basic building block of these examples is a well known ``topological broom'' pictured on Figure \ref{fig:brooms}.
An interesting feature of these constructions is that these spaces have trivial all basic known algebraic invariants, such as singular or \v{C}ech homology groups, homotopy groups etc. Consequently, to prove that the spaces are not homotopy equivalent  requires a more direct, approach via techniques of set theoretic topology. 
Further, in Theorem \ref{thm:WH-WBH},  we provide examples of pairs $\mathcal{S}_0$, $\mathcal{S}_1$ of $2n$--dimensional continua (for $n\geq 2$), modeled on the {\em Hawaiian earrings}, c.f \cite{Eda-Kawamura00},  and  satisfying:    
\begin{itemize}
\item[(a)] $\mathcal{S}_0$, $\mathcal{S}_1$ are {\em singular ANR spaces}, i.e. for specific points $s_0\in \mathcal{S}_0$, and $s_1\in \mathcal{S}_1$, complements $\mathcal{S}^\circ_0=\mathcal{S}_0-\{s_0\}$, $\mathcal{S}^\circ_1=\mathcal{S}_1-\{s_1\}$ are countable disjoint\footnote{A simple modification  of this construction (see Section \ref{sec:inf-wedges}) yields a path connected complement of analogs of $\mathcal{S}_0$ and $\mathcal{S}_1$.} sums of connected ANR spaces.
\item[(b)] $\mathcal{S}_0$, $\mathcal{S}_1$ are $(n-1)$--connected, and each connected component of $\mathcal{S}^\circ_0$ and $\mathcal{S}^\circ_1$ is locally contractible.
\item[(c)] $\mathcal{S}_0=_h\mathcal{S}_1$ but $\mathcal{S}_0\not\simeq\mathcal{S}_1$.
\end{itemize}
The point of this construction is to obtain examples of {\em compact} spaces 
which are as close as possible to ANR spaces.  The construction is a generalization of the earlier result in \cite{Stewart58} and relies on the fairly recent work in \cite{Eda-Kawamura00}.

\section*{Acknowledgements} \no We wish thank the anonymous referee for remarks and corrections which improved quality of the paper.

\section{On a role of the fundamental group in Borsuk's problem.}\label{sec:pi_1-role}

\subsection{Hopfian pairs} We recall that a finitely presented group $G$ is called {\em Hopfian}, if every epimorphism $h:G\longrightarrow G$ is an isomorphism.
Analogously, given a ring $R$, a finitely generated $R$--module $M$ is {\em Hopfian}, if any module epimorphism $h:M\longrightarrow M$ is an 
isomorphism. Let $X$ and $Y$ be a pair of $h$--equal spaces then, from definition, there are maps 
\begin{equation}\label{eq:XY-dominated}
\begin{split}
 & f:X\longrightarrow Y,\quad i:Y\longrightarrow X,\quad f\circ i\simeq \text{id}_Y,\\
 & g:Y\longrightarrow X, \quad j:X\longrightarrow Y, \quad g\circ j\simeq \text{id}_X.
\end{split} 
\end{equation}
In particular it implies that induced homomorphisms $f_\ast$, $g_\ast$ on the fundamental group and homology groups, are epimorphisms.
\begin{definition}\label{def:hopfian}
A pair of spaces $X$, $Y$ is called {\em Hopfian pair}, if and only if $X=_h Y$ and one of the epimorphisms $(g\circ f)_*$ or $(f\circ g)_*$ induced on the fundamental groups and homology modules from maps in \eqref{eq:XY-dominated} is an isomorphism.
\end{definition}
\no Note that, in the above definition, if one of the epimorphisms is an isomorphism the second one is an isomorphism as well. For convenience, let us restate Observation \ref{thm:hopfian}:
\begin{observation*}
The pair of ANR spaces: $X$, $Y$ is  Hopfian pair, if and only if, $X$ and $Y$ are homotopy equivalent.
\end{observation*}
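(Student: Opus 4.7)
The forward implication is immediate from the definitions: if $X\simeq Y$ with homotopy inverses $f:X\to Y$ and $g:Y\to X$, then setting $i:=g$ and $j:=f$ realizes $X=_h Y$ in the sense of \eqref{eq:XY-dominated}, and $(g\circ f)_*=\mathrm{id}_*$ is trivially an isomorphism on $\pi_1$ and on homology with any coefficient system. So I would dispose of this direction in one sentence.

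The substantive direction is the reverse. Assume $X,Y$ is a Hopfian pair, with maps as in \eqref{eq:XY-dominated}, and (without loss of generality) that $(g\circ f)_*$ is an isomorphism on $\pi_1(X)$ and on the homology $\mathbb{Z}[\pi_1(X)]$-modules --- equivalently, on the integral homology of the universal cover $\widetilde{X}$. The plan is to first show that the self-map $\phi:=g\circ f:X\to X$ is already a homotopy equivalence, and then to deduce from this that $f$ itself is a homotopy equivalence. For the first step, note that since $X$ is an ANR, a theorem of Milnor \cite{Milnor59} guarantees that $X$ has the homotopy type of a CW complex, so the classical homology Whitehead theorem applies: a map between CW complexes inducing an isomorphism on $\pi_1$ and on the homology of the universal covers is a homotopy equivalence. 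Hence $\phi$ has a homotopy inverse $\phi^{-1}:X\to X$.

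Given this, $\phi^{-1}\circ g$ is a left homotopy inverse of $f$, since $(\phi^{-1}\circ g)\circ f=\phi^{-1}\circ\phi\simeq\mathrm{id}_X$. But $f$ already has a right homotopy inverse $i$, because $f\circ i\simeq\mathrm{id}_Y$. A map possessing both a left and a right homotopy inverse is a homotopy equivalence (and the two inverses are automatically homotopic), so $f:X\to Y$ is a homotopy equivalence and $X\simeq Y$.

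The only real obstacle is interpreting the phrase ``homology modules'' in Definition \ref{def:hopfian} in the correct way. If it meant only ordinary integral homology, the Whitehead argument would collapse in the non-simply-connected setting. Reading it, as one must for the Observation to be true and genuinely equivalent to Whitehead's theorem, as homology with $\mathbb{Z}[\pi_1]$-local coefficients (equivalently, on the universal cover), the proof proceeds as above; I would include a brief remark at the start to fix this convention, after which everything is a standard application of Milnor's result and the homology Whitehead theorem.
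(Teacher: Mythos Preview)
Your proof is correct and rests on the same tool as the paper's---the homology Whitehead theorem for spaces of CW homotopy type, with local coefficients---but you package the argument differently. The paper argues that since $f_*$ and $g_*$ are already epimorphisms (from the dominations) and $(g\circ f)_*$ is an isomorphism, $f_*$ must be a monomorphism, hence an isomorphism, and then one concludes the same for $g_*$; Whitehead is then applied directly to $f$ (and $g$). You instead apply Whitehead to the self-map $\phi=g\circ f$ first and then invoke the elementary fact that a map with both a left and a right homotopy inverse is a homotopy equivalence. Both routes are equally short; yours has the incidental feature that Whitehead is only applied to a self-map of $X$, so the CW hypothesis on $Y$ is never used (though of course $Y$ is an ANR by assumption anyway). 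Your remark about reading ``homology modules'' as $\mathbb{Z}[\pi_1]$-modules is exactly right and matches the paper's convention.
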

\begin{proof}
 From the above definition, both compositions
 \[
 \begin{split}
  & g_* f_*=(g\circ f)_*:\pi_1(X)\longrightarrow \pi_1(X),\\
  & f_* g_*=(f\circ g)_*:\pi_1(Y)\longrightarrow \pi_1(Y),
 \end{split}
\]
are isomorphisms. Thus,  $f_*$ and $g_*$ are monomorphisms and consequently they have to be isomorphisms as well. The same reasoning applies to the module homomorphisms $f_*:H_k(X;\Lambda)\longrightarrow H_k(Y;\Lambda)$, $g_*:H_k(Y;\Lambda)\longrightarrow H_k(X;\Lambda)$, $\Lambda=\Z[\pi]$. As a consequence, maps $f$ and $g$ induce isomorhisms on $\pi_1$, and all homology with local coefficients, and the Whitehead Theorem implies that $f$ and $g$ are homotopy equivalences. Let $f:X\longrightarrow Y$ be a homotopy equivalence with the inverse $g:Y\longrightarrow X$. Then, $g\circ f\simeq \text{id}_X$ and $f\circ g\simeq \text{id}_Y$, then obviously the pair $X$, $Y$ is a Hopfian pair.  
\end{proof}
\begin{remark}\label{rem:only-1}
Note that in the above observation, it suffices to only have one ANR space; $X$ or $Y$, then the result of Milnor \cite{Milnor59}, implies that the other space (homotopy dominated by the former) is also an ANR, up to homotopy. 
\end{remark}
\begin{proof}[Proof of Theorem \ref{thm:noetherian} and Corollary \ref{cor:polycyclic}]
 Since the group rings are Noetherian rings, and modules $H_\ast(X;$ $\Z[\pi_1(X)])$ and $H_\ast(Y;\Z[\pi_1(Y)])$ are finitely generated, they in turn are Hopfian, c.f. \cite{Lam:1999}. Given that $X$ and $Y$ are $h$--equal they must form a Hopfian pair, implying Theorem \ref{thm:noetherian}.  Since polycyclic-by-finite groups are Hopfian and their group rings Noetherian,  Corollary \ref{cor:polycyclic}
 is a special case of Theorem \ref{thm:noetherian}, .
\end{proof}
The class of Hopfian groups is considerably larger than the polycyclic-by-finite groups. In particular, the following question is a weaker form of {\em Problem 2}.
\begin{question}\label{q:hopfian-pair}
Let $X$, $Y$ be finite CW--complexes (compact ANR's) such that  $X=_h Y$, suppose further $\pi_1(X)$ (and hence $\pi_1(Y)$) is Hopfian. Is $X$, $Y$ a Hopfian pair? 
\end{question}
\no The following example, guided by the results of \cite{Berridge-Dunwoody79, Harlander-Jensen06}, illustrates a delicate nature of the above question. Indeed, if $G=\pi_1(X)\cong \pi_1(Y)$ and $G$ is Hopfian, it is not necessarily the case that $X=_h Y$, even for $2$--dimensional $CW$--complexes $X$ and $Y$.
\begin{example}
  Let $G=\langle x,y\,|\,x^2=y^3\rangle$ be the standard presentation for the fundamental group of the trefoil knot, and 
  let
 \[
  G_i=\langle x,y,\bar{x},\bar{y}\,|\,x^2=y^3,\, \bar{x}^2=\bar{y}^3,\, x^{2i+1}=\bar{x}^{2i+1},\, y^{3i+1}=\bar{y}^{3i+1}\rangle,\qquad i\in \mathbb{N},
  \]
be different presentations of $G$ (c.f. \cite{Berridge-Dunwoody79, Harlander-Jensen06}). For infinitely many $i$, there are $2$--dimensional $CW$--complexes $K_i$ of distinct homotopy type, with $\pi_1(K_i)\cong G_i\cong G$, c.f. \cite{Harlander-Jensen06}. Note that the commutator subgroup $[G,G]$ of $G$ is isomorphic to $F_2$, i.e. free group on two generators, and $G\bigl/[G,G]\cong H_1(G;\Z)\cong \Z$.

 Also, $G$ is Hopfian, since both $[G,G]\cong F_2$ and $G\bigl/ [G,G]\cong \Z$ are Hopfian and it is well known that $G$ is  {\em not} polycyclic--by--finite. We claim that there are infinitely many pairs $i$ and $j$, $i\neq j$, such that $K_i\neq_h K_j$. 

 Recall that it is shown in \cite{Berridge-Dunwoody79, Harlander-Jensen06} that there are infinitely many pairs $i$ and $j$, $i\neq j$, such that $K_i$ is not homotopy equivalent to $K_j$, because $H_2(\widetilde{K}_i;\Z)$ and $H_2(\widetilde{K}_j;\Z)$ are not isomorphic as $\Z[G]$--modules. More precisely, for some prime number $p$, there are infinitely many distinct $i$ and $j$ such that $\Z_p\otimes_{\Z} H_2(\widetilde{K}_i;\Z)$ has just one generator and  $\Z_p\otimes_{\Z} H_2(\widetilde{K}_j;\Z)$ has at least two generators. 
Suppose $K_i=_h K_j$, for $i\neq j$, where the above holds, then one obtains an epimorphism  $f_\ast:H_2(K_i;\Z[G])\longrightarrow H_2(K_j;\Z[G])$ (see Definition \ref{def:hopfian}), and therefore an obvious epimorphism
\[
 \text{id}\otimes f_\ast:\Z_p\otimes_{\Z} H_\ast(K_i;\Z[G])\longrightarrow \Z_p\otimes_{\Z} H_\ast(K_j;\Z[G]),
\]
this however contradicts that  $H_\ast(K_i;\Z[G])$ has just one generator versus $H_\ast(K_j;\Z[G])$ having two generators. Thus by contradiction, we conclude that $K_i\neq_h K_j$ and hence the pair $K_i$ and $K_j$ is not a Hopfian pair. 

 The above considerations lead one to a surprising outcome when one considers spaces $K_i\vee S^2$ and $K_j\vee S^2$ in place of $K_i$ and $K_j$. By work in 
\cite{Berridge-Dunwoody79, Harlander-Jensen06} we know that 
\[
 K_i\vee S^2\simeq K_j\vee S^2,\qquad\text{thus}\qquad K_i\vee S^2=_h K_j\vee S^2.
\]
Note that $G\cong \pi_1(K_i\vee S^2)\cong \pi_1(K_j\vee S^2)$, and the pair $K_i\vee S^2$, $K_j\vee S^2$ is  Hopfian. Indeed the modules $H_2(K_i\vee S^2;\Z[G])$, $H_2(K_j\vee S^2;\Z[G])$ are Hopfian as both are isomorphic to the free $\Z[G]$--module $\Z[G]\oplus\Z[G]$ (c.f. \cite{Berridge-Dunwoody79}). This shows a difficulty in dealing with modules  $H_\ast(X;\Z[G])$ and  $H_\ast(Y;\Z[G])$, in the context of Question \ref{q:hopfian-pair}, even if the group $\pi_1(X)\cong\pi_1(Y)$ is a ``nice'' group.
\end{example}
%
%
\subsection{Poincar\'e complexes}  Now, let $M^n$ be a closed $n$--dimensional manifold and $Y$ any space (see Remark \ref{rem:only-1}), such that $M^n=_h Y$, then $M^n$, $Y$ is a Hopfian pair \cite{Berstein-Ganea59, Kwasik84}. More generally, let $X$ be a finite Poincar\'e complex of formal dimension $n$, c.f. \cite{Wall65}. To be specific, $X$ has a homotopy type of a finite CW--complex and there exists a class $[X]\in H_n(X;\Z)$, such that for all $r$ the cap product with $[X]$ induces an isomorphism
\[
 [X]\cap\cdot :H^r(X;\Lambda)\longrightarrow H_{n-r} (X;\Lambda),\qquad \Lambda=\Z[\pi_1(X)].
\]
If $Y$ is any space, such that $X=_h Y$ then $X$, $Y$ is a Hopfian pair, \cite{Kwasik84}. 
\begin{theorem}
 Suppose $X$ is a homology manifold of formal dimension $n$, i.e. $X$ is a finite dimensional ANR space such that
\[
H_\ast(X,X-\{\text{pt}\})\cong H_\ast(\R^n,\R^n-\{\text{pt}\})=\begin{cases}
\Z,\quad \ast=n,\\
0,\quad \ast\neq 0.
\end{cases}
\]
 Then $X$ is a finite Poincar\'e complex of formal dimension $n$.
\end{theorem}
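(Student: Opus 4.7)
The plan is to verify the two defining conditions of a finite Poincar\'e complex in the sense of Wall: namely, that $X$ has the homotopy type of a finite $CW$-complex, and that there exists a fundamental class $[X]\in H_n(X;\Z)$ whose cap product induces the duality isomorphism with coefficients in $\Lambda=\Z[\pi_1(X)]$.

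The first condition is handled by the main result of the Borsuk conjecture. Since $X$ is a finite-dimensional compact ANR (compactness is needed here and is implicit since we are aiming at a \emph{finite} Poincar\'e complex), West's theorem, already cited in Section~\ref{sec:intro}, provides a finite $CW$-complex in its homotopy type. So the work reduces entirely to producing a fundamental class and establishing Poincar\'e duality with local coefficients.

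For the fundamental class, the local homology hypothesis says exactly that $X$ is an ANR homology $n$-manifold. The groups $H_n(X,X-\{x\})\cong\Z$ globalize to an orientation sheaf on $X$, and a standard patching argument in the ANR setting (after reducing to a finite good cover by $\R^n$-like neighborhoods using compactness) produces, in the orientable case, a class $[X]\in H_n(X;\Z)$ restricting to a generator at every point. In the non-orientable case one must work in twisted coefficients $\Z^w$, where $w:\pi_1(X)\to\{\pm 1\}$ is the orientation character; I will carry out the orientable case and note at the end the standard modifications.

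The key step is to upgrade Poincar\'e duality from ordinary to local-coefficient duality. The classical theorem of Borel--Wilder for compact orientable ANR homology manifolds gives that cap product with $[X]$ induces an isomorphism
\[
[X]\cap\cdot:H^r(X;\Z)\longrightarrow H_{n-r}(X;\Z).
\]
To promote this to $\Lambda$-coefficients, I would pass to the universal cover $p:\widetilde{X}\to X$. Since $p$ is a local homeomorphism, $\widetilde{X}$ is again an ANR homology $n$-manifold (now non-compact), and it is automatically orientable. Applying Poincar\'e duality with compact supports on $\widetilde{X}$ yields an isomorphism between $H^r_c(\widetilde{X};\Z)$ and $H_{n-r}(\widetilde{X};\Z)$, equivariant with respect to the deck transformation action of $\pi_1(X)$. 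Translating through the standard identifications
\[
H^r(X;\Lambda)\cong H^r_c(\widetilde{X};\Z),\qquad H_{n-r}(X;\Lambda)\cong H_{n-r}(\widetilde{X};\Z),
\]
and using naturality of the cap product under covering maps, one recovers precisely the claimed duality $[X]\cap\cdot:H^r(X;\Lambda)\to H_{n-r}(X;\Lambda)$.

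The main obstacle is the invocation of Poincar\'e duality for ANR homology manifolds itself: this is not elementary and rests on sheaf-theoretic or \v{C}ech-cohomological arguments (Borel, Bredon), and I would quote it rather than reprove it. A secondary subtlety is the non-orientable case, which requires either allowing the fundamental class to lie in twisted coefficients in the definition of Poincar\'e complex, or restricting to orientable $X$; once that is fixed, the covering-space argument above goes through unchanged since $\widetilde{X}$ is orientable either way.
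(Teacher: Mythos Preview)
Your proposal is correct and, like the paper itself, ultimately rests on sheaf-theoretic Poincar\'e duality for homology manifolds rather than proving it from scratch. The paper does not give a self-contained argument either: it cites Borel for the integer-coefficient duality on $X$ and then simply points to Bredon's Theorem~9.2 (the sheaf-homology spectral sequence for $\text{id}_X$) as the only source the authors know for the full $\Lambda$-coefficient statement.

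Your route differs in that, instead of invoking Bredon directly with the local-coefficient sheaf on $X$, you pass to the universal cover $\widetilde X$, use integer duality with compact supports there, and translate back via $H^r(X;\Lambda)\cong H^r_c(\widetilde X;\Z)$ and $H_{n-r}(X;\Lambda)\cong H_{n-r}(\widetilde X;\Z)$. This is the standard manoeuvre for topological manifolds (cf.\ Wall) and is conceptually cleaner. But note that the step ``Poincar\'e duality with compact supports on the non-compact ANR homology manifold $\widetilde X$'' is precisely the Borel--Bredon input in disguise, so you have not bypassed the hard analytic fact---as you yourself acknowledge. Two minor points: the identification $H^r(X;\Lambda)\cong H^r_c(\widetilde X;\Z)$ is most transparently set up cellularly, so you are implicitly leaning on West's theorem a second time to make it clean; and your observation that compactness of $X$ is tacit but essential (both for West and for the existence of $[X]$) is well taken---the paper leaves it unstated.
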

 The above theorem is stated without a proof in \cite[p. 5099]{Johnston-Ranicki-00}. It is a well known fact that $X$ satisifies the Poincar\'e duality with integer coefficients, \cite{Borel57}. The only argument we are aware of, that shows $X$ is a Poincar\'e complex, is based on the existence of a spectral sequence for the indentity map $\text{id}_X: X\longrightarrow X$ in sheaf homology giving a very general version of Poincar\'e duality in Theorem 9.2 of \cite{Bredon-book97}. It should be noted that if $X$ is {\em polyhedral homology manifold} then a much simpler argument shows that $X$ is a Poincar\'e complex (see Theorem 2.1 in \cite{Wall65}).
\begin{corollary}
Let $X$ be a homology manifold of formal dimension $n$ and $Y$ any space with $X=_h Y$, then $X$, $Y$ is a Hopfian pair.
\end{corollary}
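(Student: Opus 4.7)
The plan is to derive this corollary as an immediate consequence of the preceding theorem combined with the Hopfian-pair property for finite Poincaré complexes that was already invoked earlier in this subsection (following \cite{Kwasik84}). There is essentially nothing new to prove: the preceding theorem upgrades the hypothesis ``homology manifold of formal dimension $n$'' to ``finite Poincaré complex of formal dimension $n$,'' and Kwasik's theorem then supplies the Hopfian-pair conclusion for any space $Y$ with $X=_h Y$.

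In more detail, first I would apply the preceding theorem to $X$: since $X$ is, by assumption, a finite dimensional ANR with the local homology of $\mathbb{R}^n$, the theorem yields a fundamental class $[X]\in H_n(X;\mathbb{Z})$ such that cap product with $[X]$ induces isomorphisms $H^r(X;\Lambda)\xrightarrow{\cong} H_{n-r}(X;\Lambda)$ for $\Lambda=\mathbb{Z}[\pi_1(X)]$, and $X$ has the homotopy type of a finite CW complex. In particular, $X$ is a finite Poincaré complex of formal dimension $n$ in the sense of \cite{Wall65}. Second, I would apply the statement recorded above the theorem: if $X$ is a finite Poincaré complex of formal dimension $n$ and $Y$ is any space with $X=_h Y$, then $X$, $Y$ is a Hopfian pair (\cite{Kwasik84}). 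The two steps together give the corollary.

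Since no step requires fresh argument, there is no genuine obstacle; the only subtle point is the invocation of the preceding theorem, whose proof (outside the polyhedral case) relies on the sheaf-theoretic spectral sequence argument from \cite{Bredon-book97} rather than on an elementary Poincaré duality computation. Once that theorem is accepted, the corollary follows in a single line.
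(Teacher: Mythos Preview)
Your proposal is correct and matches the paper's approach exactly: the corollary is stated without proof because it is the immediate concatenation of the preceding theorem (homology manifold $\Rightarrow$ finite Poincar\'e complex) with the earlier cited fact from \cite{Kwasik84} that a finite Poincar\'e complex $h$--equal to any space $Y$ forms a Hopfian pair.
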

Recall, that the well known conjecture asserts that finite dimensional homogeneous ANRs are homology manifolds, \cite{Bryant-Ferry-Mio-Weinberger96}.

Following, \cite{Bredon70}, recall that  $X$ is {\em locally isotopic} if for each path $\lambda:[0,1]\longrightarrow X$, there is a neighborhood $N$ of $\lambda(0)$ in $X$ and a map $H:I\times N\longrightarrow X$, such that $H(t,\lambda(0))=\lambda(t)$ and such that each $H(t,\,\cdot\,)$ is a homeomorphism of $N$ onto a neighborhood of $\lambda(t)$. Clearly, manifolds are locally isotopic. Suppose $X$ is a compact finite dimensional ANR space which is locally isotopic. By Theorem 4.6 of \cite{Bredon70}, $X$ is a homology manifold of some formal dimension  $n$. Thereore, we obtain
\begin{corollary}
Let $X$ be a compact finite dimensional ANR space which is locally isotopic, and let $Y$ any space such that $X=_h Y$. Then $X$, $Y$ is a Hopfian pair.
\end{corollary}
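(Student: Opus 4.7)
The statement is essentially a composition of two inputs that the excerpt has already lined up, so my plan is to make that composition explicit rather than to prove anything new from scratch. First I would invoke Bredon's Theorem 4.6 from \cite{Bredon70}: a compact finite dimensional ANR that is locally isotopic is a homology manifold of some formal dimension $n$, in the sense used earlier in this section (i.e.\ the local homology $H_\ast(X,X-\{\text{pt}\})$ agrees with that of $\R^n$ at every point). This is purely a citation step, but it is the substantive input, since without it the hypothesis of local isotopy carries no algebraic content one can feed into the duality machinery.

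Once $X$ has been upgraded to a homology manifold of formal dimension $n$, I would simply apply the preceding corollary: any space $Y$ with $X =_h Y$ forms a Hopfian pair with $X$. Under the hood this is the chain ``$X$ is a finite Poincar\'e complex of formal dimension $n$ (via the theorem stated in this subsection and the sheaf-homology argument of \cite{Bredon-book97})'' $\Longrightarrow$ ``$X, Y$ is a Hopfian pair (by \cite{Kwasik84})''; but since the excerpt has already packaged both implications as standalone statements, the corollary is literally a one-line deduction.

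The only step that requires any judgment is verifying that the hypotheses of the intermediate results are met: namely, that local isotopy together with being a compact finite dimensional ANR genuinely yields a homology manifold \emph{of some} formal dimension $n$, rather than a space in which the dimension jumps from point to point. Bredon's theorem handles this, but I would pause to note that compactness plus finite dimensionality is what keeps $n$ uniform; in a non-compact or infinite dimensional setting the local model could vary and the Poincar\'e complex conclusion would fail. With that noted, there is no obstacle, and the result follows.
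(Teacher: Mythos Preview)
Your proposal is correct and matches the paper's approach exactly: the paper simply cites Theorem~4.6 of \cite{Bredon70} in the sentence preceding the corollary to conclude that $X$ is a homology manifold of some formal dimension $n$, and then the preceding corollary applies. Your aside that compactness is what keeps the formal dimension $n$ uniform is not quite accurate---for a connected space, local isotopy along paths already forces the local homology to be the same at every point---but this is tangential and does not affect the argument.
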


\no In the case $X$ admits an $H$--space structure, $\pi_1(X)$ is abelian, in particular polycyclic-by-finite, thus if  $H_\ast(X)$ to be finitely generated in each degree (where we allow the degree to go to infinity), we obtain 
\begin{proposition}
 Let $X$ be an $H$--space, such that $H_k(X)$ is finitely generated for each $k$, and $Y$ any space such that $X=_h Y$. Then $X$, $Y$ is a Hopfian pair.
\end{proposition}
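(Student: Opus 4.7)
The plan is to verify that the hypothesis of Theorem \ref{thm:noetherian} (in its ``finitely generated homology'' variant) is satisfied, so that the conclusion is delivered essentially for free.

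First, I would recall the classical fact that a path-connected $H$-space $X$ has abelian $\pi_1(X)$: taking the unit as basepoint, the Eckmann--Hilton argument shows that the second binary operation on $\pi_1(X)$ induced by the $H$-space multiplication $\mu:X\times X\to X$ coincides with loop concatenation and both must be commutative. Since $\pi_1(X)$ is then abelian, the Hurewicz homomorphism yields an isomorphism $\pi_1(X)\cong H_1(X;\Z)$, and the hypothesis that $H_1(X)$ is finitely generated forces $\pi_1(X)$ to be a finitely generated abelian group. Every such group is polycyclic, hence polycyclic-by-finite; in particular it is Hopfian, and by a classical theorem of P.\ Hall its integral group ring $\Lambda=\Z[\pi_1(X)]$ is (left and right) Noetherian.

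Second, I would verify that $Y$ also has finitely generated homology in each degree. From the relation $X=_h Y$ there exist maps $f:X\to Y$ and $i:Y\to X$ with $f\circ i\simeq \text{id}_Y$, so $i_\ast:H_k(Y;\Z)\to H_k(X;\Z)$ is a split injection with retraction $f_\ast$. Hence $H_k(Y;\Z)$ is a retract of the finitely generated abelian group $H_k(X;\Z)$, and is therefore itself finitely generated.

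With these ingredients in hand, Theorem \ref{thm:noetherian} applies and yields that $X$, $Y$ is a Hopfian pair. The technical point hiding in the background --- and the one I would expect to be the main obstacle were one attempting a self-contained argument --- is the passage from finite generation of ordinary integral homology to finite generation of the local-coefficient modules $H_k(X;\Lambda)$ and $H_k(Y;\Lambda)$, which is what actually feeds the module-theoretic Hopfian property via the Noetherian ring $\Lambda$. That step is already absorbed into Theorem \ref{thm:noetherian}, so the proposition itself reduces to verifying the group-theoretic hypotheses listed there, which is exactly what the two steps above accomplish.
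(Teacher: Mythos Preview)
Your proposal is correct and follows essentially the same route as the paper: the paper simply observes that for an $H$-space $\pi_1(X)$ is abelian, hence (being finitely generated via $H_1$) polycyclic-by-finite, and then invokes Theorem~\ref{thm:noetherian}. You have spelled out the intermediate steps (Eckmann--Hilton, Hurewicz, Hall's theorem) that the paper leaves implicit, and your verification that $H_k(Y)$ is also finitely generated is not strictly needed since Theorem~\ref{thm:noetherian} only demands this of $X$, but the argument is otherwise identical.
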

\no Clearly,  if $X$ is a compact $H$--space the above homological condition holds. Curiously enough, compact $H$--spaces are also Poincar\'e complexes, as can be deduced from the work in \cite{Bauer-et.al04}.

\section{About \texorpdfstring{$h$}{h}--equal but not homotopy equivalent spaces}\label{sec:continua}
Looking for a counterexample to {\em Problem 2}, one may consider the following problem in the combinatorial group theory;  suppose $G$ and $H$, $G\not\cong H$ are two finitely presented groups and retracts of each other, which would make such pair of groups ``strongly'' non--Hopfian.
If both $G$ and $H$ are finite dimensional, i.e. $K(G;1)$ and $K(H;1)$ are chosen to be finite $CW$--complexes, then the functoriality of the construction of $K(\pi;1)$--spaces would imply the existence of a counterexample to {\em Problem 2}, namely
\[
 K(G;1)=_h K(H;1),\quad \text{and}\quad K(G;1)\not\simeq K(H;1).
\]
\no Consequently, the following algebraic question is of crucial importance and of an independent interest.
\begin{question}
Find two finitely presented groups  $G$ and $H$, such that $G\not\cong H$ which are retracts of each other.
\end{question}

If one considers a more general class of spaces,  then the answer to {\em Problem 2} is negative, as first observed by Stewart in \cite{Stewart58}, who provided examples of noncompact ANR spaces. The remainder of this paper is devoted to a construction of compact examples with particular properties as described in the introduction, Section \ref{sec:intro}.
%
%
\subsection{Infinite wedges of ``hairy disks''}
Our example is inspired by constructions of both \cite{Karimov-Repovs-Rosicki-Zastrow05} and \cite{Stewart58}, and based on the ``hairy disk'' depicted in Figure \ref{fig:hairy-disk}.
First, consider a double broom $\mathcal{B}$ as shown on Figure \ref{fig:brooms}.  $\mathcal{B}$ is a well known  space which is not contractible but has all trivial known algebraic invariants, such as homology and homotopy groups etc. \cite[p. 295]{Hilton-Wylie-book}. 
Denote the {\em center point} of the broom $\mathcal{B}$ by $v$ and the left and right sequence of broom's endpoints converging to $v$ by $\{a_n\}$ and $\{b_n\}$ respectively.  Generally, $J_x$, provided it is uniquely determined, will refer to a segment of $\mathcal{B}$ containing $x\in\mathcal{B}$. An exception to this are the following cases:  for $x=v, a_0, b_0$, we set
\begin{equation}\label{eq:J-arms}
 J_v=[v,a_0]\cup [v,b_0],\quad  J_{a_0}=[v,a_0],\quad  J_{b_0}=[v,b_0].
\end{equation}
In particular,
\begin{equation}\label{eq:J_a-J_b-arms}
	 J_{a_n}=[a_n, a_0],\quad J_{b_n}=[b_n, b_0].
\end{equation}
\no Naturally, we may view $\mathcal{B}$ as a wedge product of two pieces $A$ and $B$, containing sequences $\{a_n\}$ and $\{b_n\}$, i.e.
\begin{equation}\label{eq:B-union}
 \mathcal{B}=A\vee B,\qquad A=J_{a_0}\cup\bigcup_{n} J_{a_n},\quad  B=J_{b_0}\cup\bigcup_{n} J_{b_n}.
\end{equation}
\no Further, we order points in $\mathcal{B}$ along segments $J_x$; simply assuming the order is ``increasing'' from the bottom to top, for instance any $z\in J_{a_n}$ satisfies $a_n\leq z\leq a_0$. In particular, if  $x, y\in J_{w}$, and $x\leq y$ with respect to this order, then $[x,y]$ will denote a portion of the segment $J_{w}$ containing all $z$ such that $x\leq z\leq y$.
%
\begin{figure}[!ht] 
  \centering  
 \includegraphics[width=0.6\textwidth]{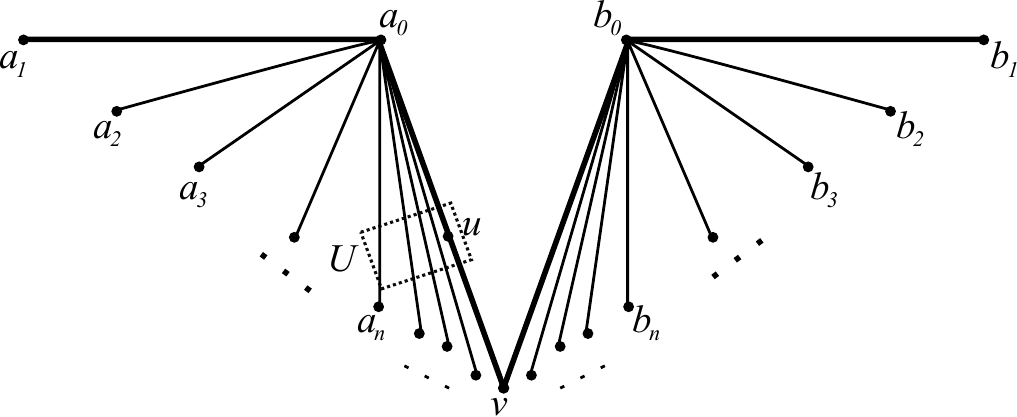}
  \caption{Topological broom denoted by $\mathcal{B}$, with the center point $v$, and a location of the point $u$, and its neighborhood $U$, considered in the proof of Lemma \ref{lem:g-f-hf}.} \label{fig:brooms} 
\end{figure} 

\begin{definition}\label{def:h.f.}
Given a space $X$, we say $x\in X$ is {\em homotopically fixed in $X$}, if $x$ is fixed under any homotopy $f_t$, where $f_0=\text{id}_X$. The set of homotopically fixed points in $X$ is denoted by
\begin{equation}\label{eq:hf(X)}
 hf(X)=\{x\in X\ |\ x\ \text{is  homotopically fixed in $X$}\}.
\end{equation}
\end{definition}
\no The set $hf(X)$ is a closed subset of $X$, in particular we have the following fact about $\mathcal{B}$:

\begin{lemma}\label{lem:v-fixed-B}
 Suppose $f:\mathcal{B}\longrightarrow \mathcal{B}$ fixes $v$, i.e. $f(v)=v$, and $v$ is a limit point for both sets: $Z \cap A$ and $Z \cap B$, $Z=f(\mathcal{B})$. Then, any homotopy $f_t:\mathcal{B}\longrightarrow \mathcal{B}$, $f_0=f$ keeps $v$ fixed, i.e. $f_t(v)=v$.
\end{lemma}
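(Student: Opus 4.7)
I would argue by contradiction. Assume the continuous path $\gamma(t):=f_t(v)$ is not identically $v$. Since $A$ and $B$ are closed in $\mathcal{B}$ with $A\cap B=\{v\}$, both $A\setminus\{v\}$ and $B\setminus\{v\}$ are open and $v$ is a cut point of $\mathcal{B}$. By analyzing $\gamma$ at the first time it departs from $v$, using that the only locally-connected piece of $\mathcal{B}$ through $v$ is the V-shape $[v,v_A]\cup[v,v_B]$, and a connectedness argument for the image of an initial arc of $\gamma$, I reduce --- after a parameter shift and possibly swapping the roles of $A$ and $B$ --- to the setting $\gamma(0)=v$ and $\gamma((0,t_1])\subset A\setminus\{v\}$, with $\rho:=d(\gamma(t_1),v)>0$.

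The main tool is uniform continuity of $F(x,t):=f_t(x)$ on the compact product $\mathcal{B}\times[0,1]$: fixing $\epsilon\in(0,\rho)$ small enough that the $\epsilon$-ball of $\gamma(t_1)$ in $\mathcal{B}$ lies inside $A\setminus\{v\}$, there is $\delta>0$ with $d(x,v)<\delta\Rightarrow d(f_t(x),\gamma(t))<\epsilon$ for all $t$. The hypothesis $v\in\overline{Z\cap B}$ yields a sequence $\xi_k\in\mathcal{B}$ with $f(\xi_k)\in B\setminus\{v\}$ and $f(\xi_k)\to v$; by compactness a subsequence converges $\xi_k\to\xi_\infty\in f^{-1}(v)$. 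In the principal case $\xi_\infty=v$, one has $d(\xi_k,v)<\delta$ for large $k$, so the continuous path $\alpha_k(t):=f_t(\xi_k)$ is uniformly $\epsilon$-close to $\gamma$, giving $\alpha_k(0)\in B\setminus\{v\}$ while $\alpha_k(t_1)\in A\setminus\{v\}$. Since $v$ separates the open sets $A\setminus\{v\}$ and $B\setminus\{v\}$, $\alpha_k$ must cross $v$ at a first time $s_k\in(0,t_1)$, and by connectedness $\alpha_k([0,s_k])\subset B$.

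The contradiction is obtained by invoking the symmetric hypothesis $v\in\overline{Z\cap A}$ in parallel. This produces a second sequence $\eta_k$ with $f(\eta_k)\in A\setminus\{v\}$, $f(\eta_k)\to v$, and a path $\beta_k(t):=f_t(\eta_k)$ uniformly $\epsilon$-close to $\gamma$. Throughout $[0,t_1]$ one has $d(\alpha_k(t),\beta_k(t))<2\epsilon$. Using the planar V-geometry at $v$ --- namely, that if $p\in A\setminus\{v\}$ and $q\in B\setminus\{v\}$ satisfy $d(p,q)<2\epsilon$, then both points lie within $2\epsilon/\sin\phi$ of $v$, where $\phi$ is the opening angle of the V at $v$ --- applied to $\alpha_k\in B$ and the corresponding values of $\beta_k\in A$ on $[0,s_k]$, the path $\gamma|_{[0,s_k]}$ (which is sandwiched between them via the $\epsilon$-closeness) is squeezed into a small neighborhood of $v$. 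Combined with $\gamma(t_1)\in A\setminus\{v\}$ at distance $\rho$ from $v$, this forces a definite excursion of $\gamma$ into $A$ on $(s_k,t_1]$; tracking $\alpha_k$ and $\beta_k$ on this interval --- where $\alpha_k$ must travel from $v$ to $A$ at distance $\geq\rho-\epsilon$ from $v$ while still remaining $\epsilon$-close to $\gamma$ and within $2\epsilon$ of $\beta_k$ --- ultimately yields an inconsistency at $t=s_k$.

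The main obstacle is precisely this last geometric bookkeeping: one must carefully coordinate the ``escape times'' of $\gamma$ from the $\epsilon$-ball of $v$ with the crossing time $s_k$ and the positions of $\alpha_k,\beta_k$, and show that the double accumulation at $v$ rigidifies $\gamma$ enough that no consistent motion into $A$ is possible. The remaining case $\xi_\infty\neq v$ is handled in the same spirit by centering the uniform continuity argument at $(\xi_\infty,0)$ rather than $(v,0)$.
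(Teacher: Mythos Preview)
Your approach has a genuine gap: it never uses the specific arm structure of the broom, and that structure is precisely what makes the lemma work. You treat $A$ and $B$ merely as two closed pieces meeting at the cut point $v$ and try to squeeze out a contradiction from ``V-geometry'' and $\epsilon$--closeness of $\alpha_k,\beta_k,\gamma$. But nothing in that setup prevents the following scenario: $\alpha_k$ starts in $B$, crosses $v$ at time $s_k$ (with $s_k\to 0$), and thereafter follows $\gamma$ into $A$; simultaneously $\beta_k$ starts in $A$ and stays in $A$, also following $\gamma$. All your inequalities are satisfied and there is no inconsistency at $t=s_k$ or anywhere else. The ``geometric bookkeeping'' you flag as the main obstacle is not a bookkeeping issue---it is where the argument actually fails. (Concretely: if $f$ happened to have image contained in the arc $J_v=[a_0,v]\cup[v,b_0]$, then $v$ would still be a limit point of $Z\cap A$ and $Z\cap B$, yet one could slide $f$ along $J_v$ and move $v$; your argument would have to rule this out and it cannot.) Two smaller problems: your parameter shift replaces $f_0$ by $f_{t_0}$, so the hypothesis on $Z=f(\mathcal{B})$ no longer refers to the new time $0$; and the case $\xi_\infty\neq v$ is not handled by ``the same spirit'', since then $f_t(\xi_k)$ converges to $f_t(\xi_\infty)$, not to $\gamma(t)$.

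What the paper does instead is exactly the missing idea. It chooses the accumulating points $u_n\in Z\cap A$ and $w_n\in Z\cap B$ to lie on \emph{distinct arms} $J_{a_n}$ and $J_{b_n}$ (not just ``somewhere in $A$'' or ``somewhere in $B$''). The tracked paths then start in $J_{a_n}$ (resp.\ $J_{b_n}$), and the only way such a path can leave its arm is through the far endpoint $a_0$ (resp.\ $b_0$); for large $n$ this is excluded by the uniform $\epsilon$--closeness to $v_t$ near $t=0$. Hence for each $t$ the tracked paths remain in $J_{a_n}\subset A$ on one side and in $J_{b_n}\subset B$ on the other, so their common limit $v_t$ lies in $A\cap B=\{v\}$. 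The arms, not the V-angle, are what trap the motion.
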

\begin{proof}[Sketch of Proof]
 Choose sequences $\{u_n\}$, $u_n\in J_{a_n}$ in $Z\cap A$  and $\{w_n\}$, $w_n\in J_{b_n}$ in $Z\cap B$ respectively, such that
 \[
  u_n\longrightarrow v,\qquad\text{and}\qquad w_n\longrightarrow v\qquad \text{in}\quad \mathcal{B}.
 \]
\no Let $v_t=f_t(v)$, by continuity, for each $t$:
\[
  f_t(u_n)\longrightarrow v_t,\qquad\text{and}\qquad f_t(w_n)\longrightarrow v_t.
 \]
	\no Note that a point $u_n$ can only move up along the arm $J_{u_n}\subset A$ of $\mathcal{B}$ and $w_n$ move up along $J_{w_n}\subset B$ (for large enough $n$).  Thus $v_t=v_0$ for all $t$, because $A\cap B=\{v\}$.
\end{proof}

\no The above lemma is completely analogous to \cite[Lemma 2.3]{Karimov-Repovs-Rosicki-Zastrow05}, where a similar topological broom  is considered\footnote{We choose the broom $\mathcal{B}$, shown on Figure \ref{fig:brooms}, over the one constructed in \cite{Karimov-Repovs-Rosicki-Zastrow05} to simplify certain arguments of this section.}.

\begin{corollary}\label{lem:hf(B)}
 $hf(\mathcal{B})=\{v\}$.
\end{corollary}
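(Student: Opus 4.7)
I would prove $hf(\mathcal{B}) = \{v\}$ as two inclusions.

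The inclusion $\{v\} \subseteq hf(\mathcal{B})$ follows immediately from Lemma \ref{lem:v-fixed-B} applied to $f = \text{id}_{\mathcal{B}}$. Since $f(\mathcal{B}) = \mathcal{B}$ contains the sequences $\{a_n\} \subset A$ and $\{b_n\} \subset B$, both of which converge to $v$, the hypothesis that $v$ is a limit point of $f(\mathcal{B}) \cap A$ and of $f(\mathcal{B}) \cap B$ holds. The conclusion of the lemma then says that every homotopy $f_t$ with $f_0 = \text{id}_{\mathcal{B}}$ fixes $v$, so $v \in hf(\mathcal{B})$.

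For the reverse inclusion I would construct, for each $x \in \mathcal{B} \setminus \{v\}$, an explicit homotopy $f_t$ with $f_0 = \text{id}_{\mathcal{B}}$ and $f_1(x) \neq x$; by the symmetry $\mathcal{B} = A \vee B$ it suffices to treat $x \in A$. Two cases arise. If $x \in J_{a_n}$ for some $n \geq 1$ with $x \neq a_0$, I parameterize $J_{a_n}$ as $[0,1]$ with $0 \leftrightarrow a_n$, $1 \leftrightarrow a_0$, set $f_t(s) = \min(s+t,1)$ on $J_{a_n}$, and let $f_t$ be the identity on $\mathcal{B} \setminus J_{a_n}$; a small enough neighborhood of $v$ is disjoint from the fixed bristle $J_{a_n}$, so $f_t$ is continuous and it moves every point of $J_{a_n}$ other than $a_0$. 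If instead $x$ lies on the handle $J_{a_0}$ (possibly $x = a_0$) with $x \neq v$, I fix a small $\epsilon \in (0,1)$, parameterize $J_{a_0}$ as $[0,1]$ with $0 \leftrightarrow v$, $1 \leftrightarrow a_0$, set $f_t(s) = s(1 - t\epsilon)$ on $J_{a_0}$ so that $a_0$ is sent to the interior point $p_t := 1 - t\epsilon$, and on each bristle $J_{a_n}$ let $f_t$ be the uniform $[0,1]$-parameterization of the concatenated path $J_{a_n} \cup [a_0, p_t]$ running from $a_n$ through $a_0$ down to $p_t$; on $B$ I leave $f_t$ to be the identity. This moves every point of $J_{a_0}$ other than $v$, including $a_0$ itself.

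The main obstacle is continuity at $v$ in the handle case, because the bristles $J_{a_n}$ accumulate onto $J_{a_0}$. A naive homotopy that shrinks only the handle while fixing bristles pointwise would not even be well defined at $a_0$, since $a_0$ must map to a single point of $\mathcal{B}$ regardless of which arc one views it through. The concatenated-path construction forces $f_t(a_0) = p_t$ uniformly across all arcs meeting at $a_0$, and crucially it sends points of $J_{a_n}$ near $a_n$ to points still near $a_n$. Hence whenever $y_k \in J_{a_{n_k}}$ satisfies $y_k \to v$ (which forces both $n_k \to \infty$ and the parameter of $y_k$ on $J_{a_{n_k}}$ to tend to $0$), the image $f_t(y_k)$ stays near $a_{n_k}$ and so tends to $v$, giving the needed continuity. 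Applying the symmetric constructions on the $B$-side exhausts $\mathcal{B} \setminus \{v\}$, finishing the proof.
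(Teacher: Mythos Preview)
Your Case 2 homotopy is not continuous, and the flaw is exactly at the points you did \emph{not} check: the interior points of the handle $J_{a_0}$. Recall from the description of $\mathcal{B}$ that the bristles $J_{a_n}$ accumulate on the handle---indeed $J_{v}\setminus\{a_0,b_0\}$ is precisely the non--locally--path--connected locus. Take a handle point $p$ at parameter $s_0\in(0,1)$ and a sequence $y_k\to p$ with $y_k\in J_{a_{n_k}}$, $n_k\to\infty$, at parameter $s_k\to s_0$. Your map sends $p$ to parameter $s_0(1-t\epsilon)$ on $J_{a_0}$ (toward $v$), while your concatenated--path rule sends $y_k$ to parameter $s_k(1+t\epsilon)$ on $J_{a_{n_k}}$ (toward $a_0$), which accumulates on the handle at parameter $s_0(1+t\epsilon)$. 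These two limits disagree for every $t>0$, so $f_t$ fails to be continuous at $p$. The difficulty is intrinsic to pushing $a_0$ \emph{down the handle}: consistency at $a_0$ forces every bristle to be stretched toward $a_0$, which is the opposite direction from the motion you impose on the handle itself, and since the bristles pile up on the handle this contradiction cannot be repaired.

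The paper handles $a_0$ differently: it lets $a_0$ ``flow out'' along one particular bristle, say $J_{a_1}$. In that construction the handle $J_{a_0}$ is treated the same as every other arc through $a_0$ except $J_{a_1}$: each is stretched toward $a_0$ and then fed into $J_{a_1}$. Now the motion on the handle and on the accumulating bristles $J_{a_n}$ ($n$ large) points in the \emph{same} direction, so the limits at interior handle points match. The one arc moving the other way is $J_{a_1}$, but $J_{a_1}$ meets $\overline{\mathcal{B}\setminus J_{a_1}}$ only at $a_0$, and there the construction is consistent. Your treatment of $v\in hf(\mathcal{B})$ and of the bristle points in Case~1 is fine; it is only the mechanism for moving $a_0$ (and, as written, the interior handle points) that needs to be replaced.
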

\begin{proof}
 We already know that $v\in hf(\mathcal{B})$ by Lemma \ref{lem:v-fixed-B}. It is easy to rule out other points in $\mathcal{B}$ as homotopically fixed, with an exception of possibly $a_0$ and $b_0$. Observe however, that $a_0$ and $b_0$ cannot be homotopically fixed as we may construct a homotopy which lets $a_0$ or $b_0$ to ``flow out'' along one of the arms of $\mathcal{B}$, e.g $J_{a_1}$ and  $J_{b_1}$ respectively.
\end{proof}
 Before introducing relevant spaces we make the following convenient definition of a {\em wedge product} $\curlyvee$ of spaces $X$ and $Y$ disjointly embedded in $\R^N$ (for some $N$): 
\begin{equation}\label{eq:new-wedge}
 X\curlyvee_{x,y} Y:= X\cup [x,y]\cup Y,\qquad x\in X,\ y\in Y,
\end{equation}
where $[x,y]$ is an arc in $\R^N$ connecting points $x$ and $y$, with its interior $(x,y)$ disjoint from $X$ and $Y$. 
%
%
\begin{figure}[!ht] 
  \centering
   \includegraphics[width=0.3\textwidth]{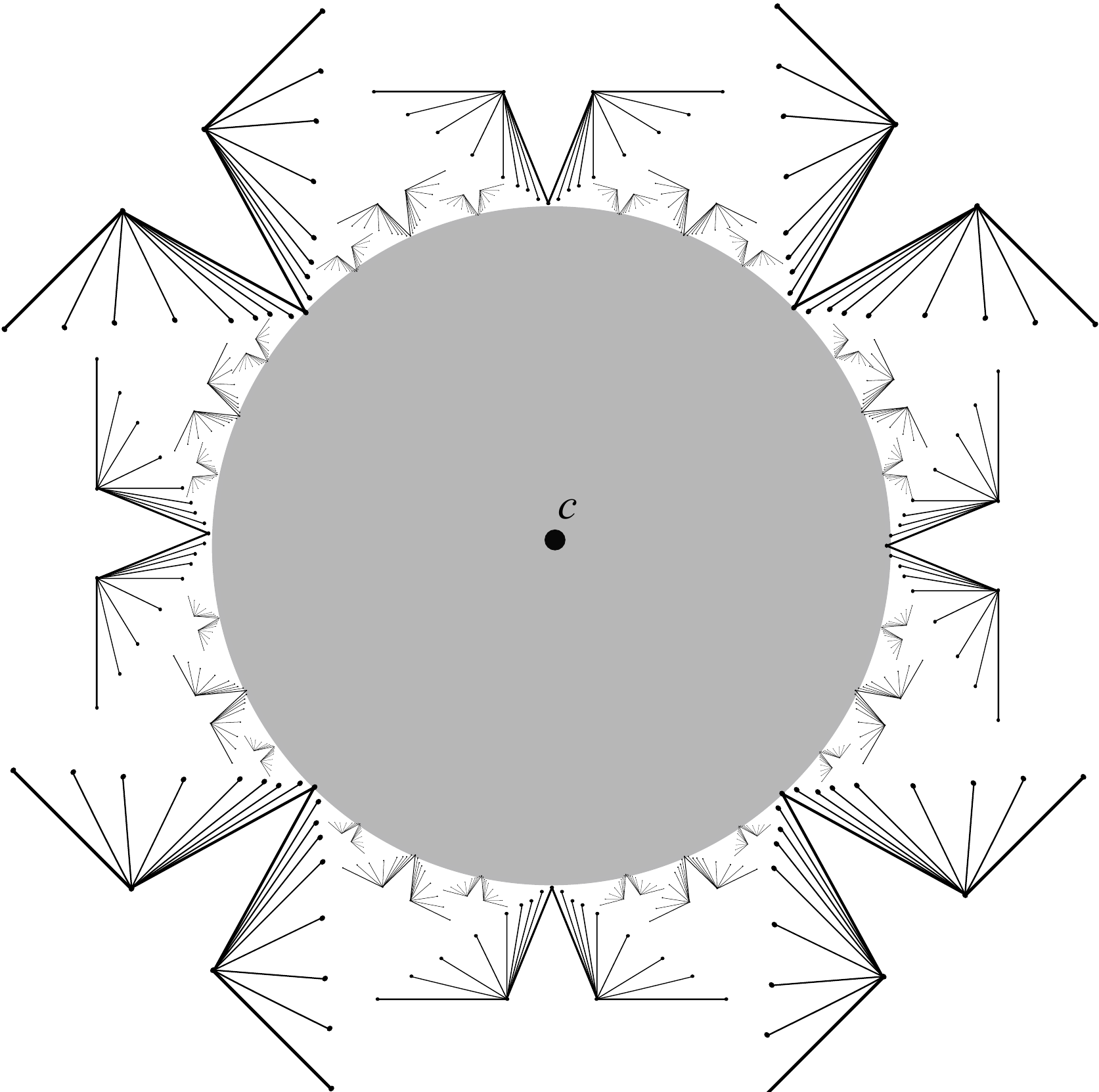} \qquad
  \caption{The ``hairy disk'' $\mathcal{H}$ from \cite[p. 286]{Karimov-Repovs-Rosicki-Zastrow05}, with countably many copies of $\mathcal{B}$ densely attached along the boundary of the unit disk in $\R^2$.} \label{fig:hairy-disk} 
\end{figure} 
The disk $\mathcal{H}$ is constructed by densely attaching brooms $\mathcal{B}$, Figure \ref{fig:brooms}, along the boundary of the unit disk $D^2$ in $\R^2$, c.f. \cite{Karimov-Repovs-Rosicki-Zastrow05}. More precisely, let 
\begin{equation}\label{eq:M-def}
M =\{m_i\}^{\infty}_{i=1}
\end{equation}
to be a countable dense subset of the boundary of $D^2$. Then $\mathcal{H}$ is obtained by attaching to each
point $m_i$ the broom $\mathcal{B}$ at the center vertex $v$, so that the copies of $\mathcal{B}$ do not intersect each
other and their diameters tend to zero as $i\to\infty$. Denote by $c$ the center of interior disk $D^2$ in $\mathcal{H}$. Following the ideas of \cite{Stewart58} we consider a countable wedge product of $\mathcal{H}$, with center points at $c(k)=(-\frac{1}{k},0,0)$ along the $x$--axis of $\R^3$. Each copy of $\mathcal{H}$ is denoted by $\mathcal{H}(k)$ and contained in the translated $yz$--plane to the point $c(k)$, together with the connecting segments $[c(k),c(k+1)]$ along the $x$--axis. In addition, each factor $\mathcal{H}(k)$ is scaled down to have the diameter $\frac{1}{k}$. This process yields a non--compact space we denote by $\mathcal{WH}^\circ$. The second space, denoted by $\mathcal{WBH}^\circ$ is obtained from $\mathcal{WH}^\circ$ by wedging a copy of $\mathcal{B}$ at the first factor. Using the notation in \eqref{eq:new-wedge}, we express these spaces as follows
%
%
\begin{figure}[!ht] 
  \centering
   \includegraphics[width=0.45\textwidth]{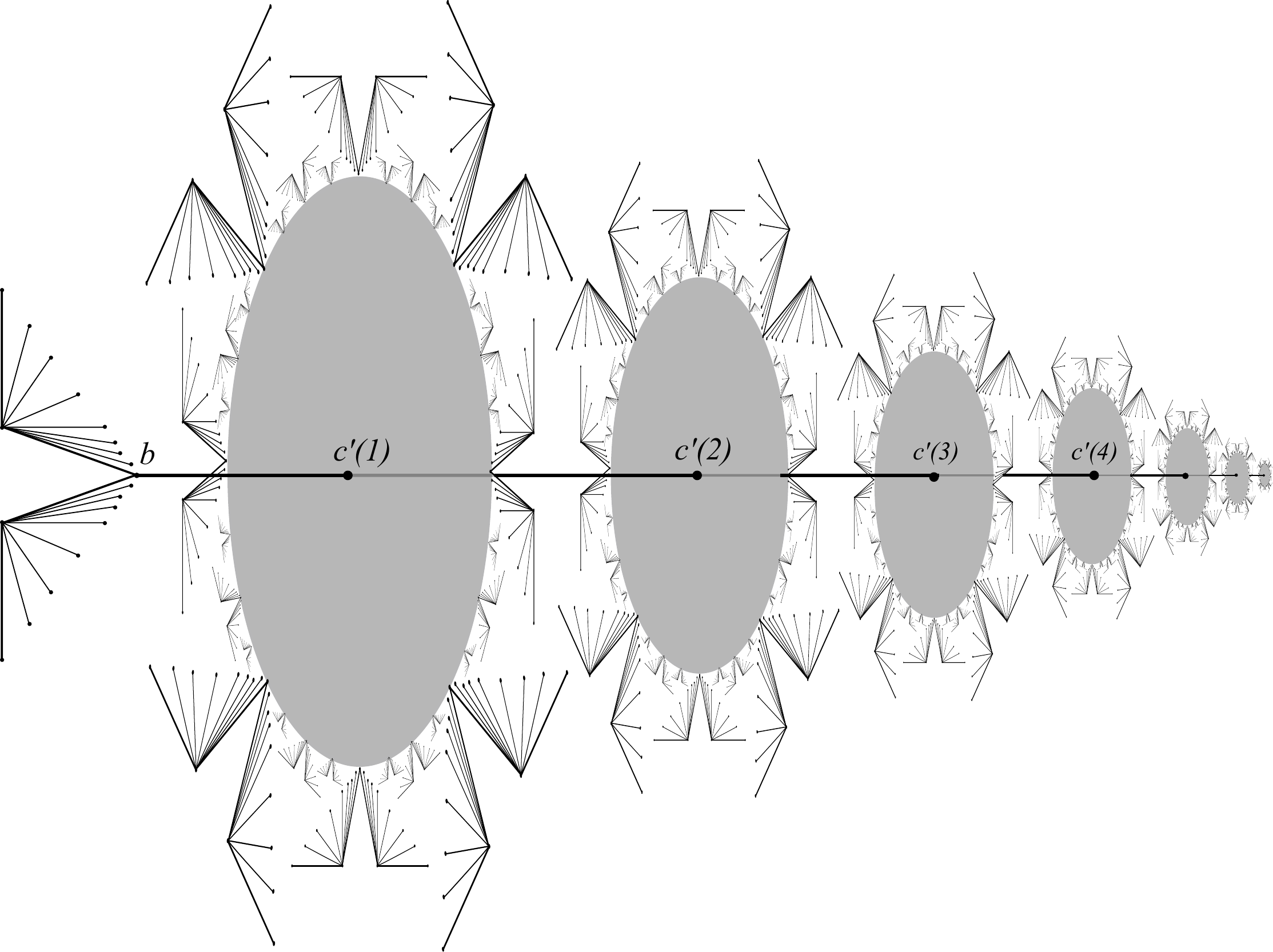}\quad \includegraphics[width=0.45\textwidth]{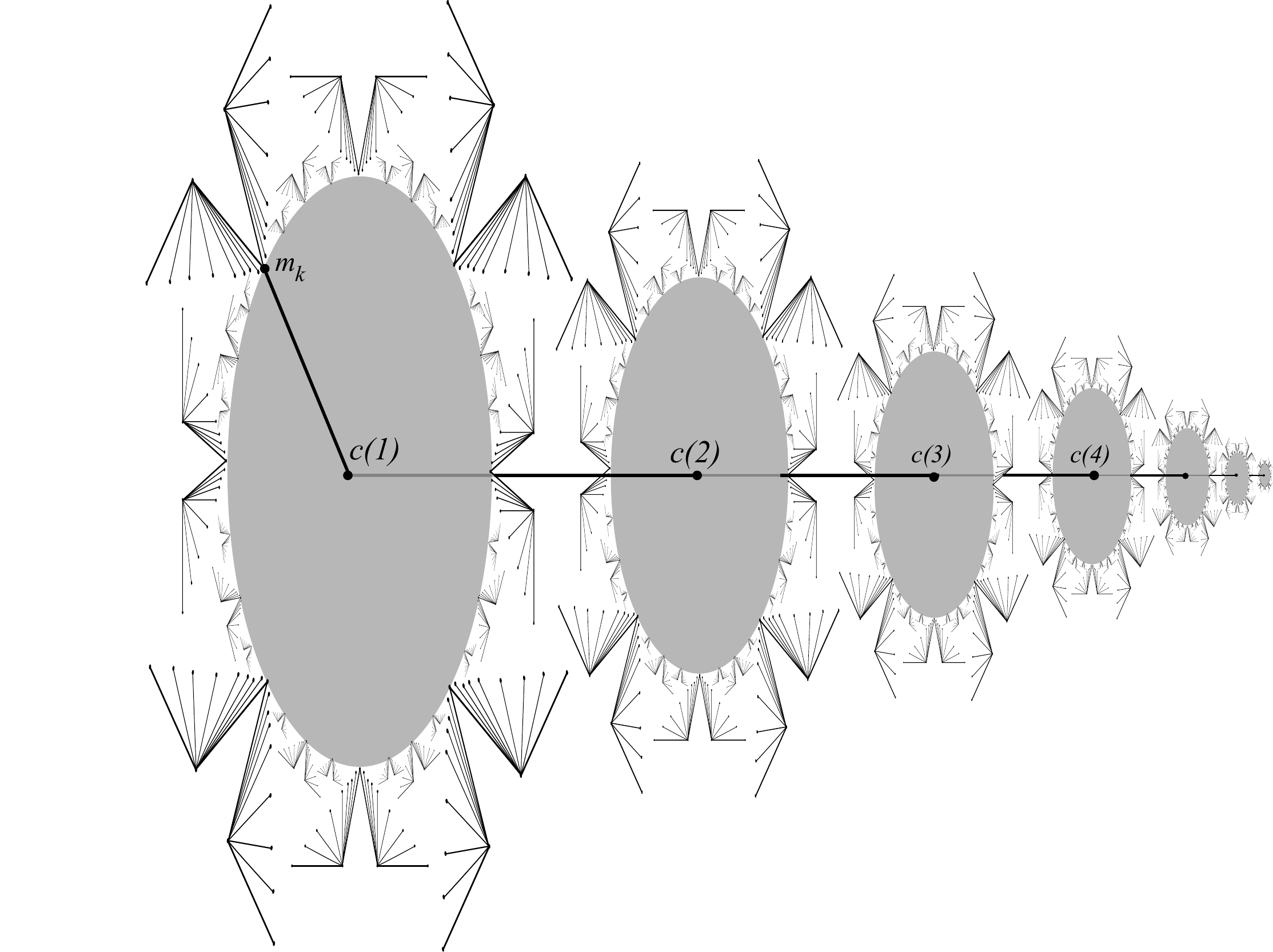}
  \caption{Infinite wedge products of hairy disks: $\mathcal{WBH}^\circ$(left) and $\mathcal{WH}^\circ$(right) embedded in $\R^3$. The segment connecting $c(1)$ and $m_k$, in the right picture, indicates a choice of embedding $\mathcal{WBH}^\circ\hookrightarrow \mathcal{WH}^\circ$.} \label{fig:WH} 
\end{figure} 
\begin{equation}\label{eq:WH-WBH-o}
\begin{split}
  \mathcal{WH}^\circ & =\mathcal{H}(1)\curlyvee_{c(1),c(2)}\mathcal{H}(2)\curlyvee_{c(2),c(3)}\cdots\curlyvee_{c(k-1),c(k)}\mathcal{H}(k)\curlyvee_{c(k),c(k+1)}\cdots,\\
  \mathcal{WBH}^\circ & =\mathcal{B}\curlyvee_{b,c'(1)} \mathcal{H}'(1)\curlyvee_{c'(1),c'(2)}\cdots\curlyvee_{c'(k-1),c'(k)}\mathcal{H}'(k)\curlyvee_{c'(k),c'(k+1)}\cdots,
\end{split}
\end{equation}
\no where $b$ denotes the center point $v$ of the $\mathcal{B}$ factor, and $\mathcal{H}$--factors of $\mathcal{WBH}^\circ$ are denoted by $\mathcal{H}'(k)$ for clarity.
The countable dense subset $M$ of $\mathcal{H}$ given in \eqref{eq:M-def} will be further denoted by $M(k)$ for each  $\mathcal{H}(k)$ in $\mathcal{WH}^\circ$, and by $M'(k)$ for each $\mathcal{H}'(k)$ in $\mathcal{WBH}^\circ$. Including the origin of $\R^3$ in both $\mathcal{WH}^\circ$ and $\mathcal{WBH}^\circ$ yields
\begin{equation}\label{eq:WH-WBH}
\mathcal{WH}=\mathcal{WH}^\circ\cup\{(0,0,0)\},\qquad \mathcal{WBH}=\mathcal{WBH}^\circ\cup\{(0,0,0)\}.
\end{equation}
\no Observe that $\mathcal{WH}$ and $\mathcal{WBH}$ are homeomorphic with one--point compactifications of  $\mathcal{WH}^\circ$ and $\mathcal{WBH}^\circ$. 
 We claim $\mathcal{WH}^\circ$ and $\mathcal{WBH}^\circ$ are $h$--equal, in particular there exist retractions\footnote{thus, $\mathcal{WH}^\circ$ and $\mathcal{WBH}^\circ$ are $r$--equal, c.f. \cite{Borsuk67}.} 
\begin{equation}\label{eq:retractions}
 r_{\mathcal{WH}}:\mathcal{WBH}^\circ\longrightarrow \mathcal{WH}^\circ,\qquad r_{\mathcal{WBH}}:\mathcal{WH}^\circ\longrightarrow \mathcal{WBH}^\circ.
\end{equation}
 Since $\mathcal{WH}$ is naturally a subset of $\mathcal{WBH}$, $r_{\mathcal{WH}}$ can be chosen as a quotient projection mapping the $\mathcal{B}$--factor, together with the segment $[b,c'(1)]$, to the point $c(1)\in \mathcal{H}(1)$ of $\mathcal{WH}^\circ$. 
 The retraction $r_{\mathcal{WBH}}$ of $\mathcal{WH}^\circ$ onto $\mathcal{WBH}^\circ$ can be defined once we choose an embedding 
 $\iota:\mathcal{WBH}^\circ \hookrightarrow\mathcal{WH}^\circ$. Once a point $m_k\in M(1)$ is selected, the embedding $\iota$ can be chosen to map the $\mathcal{B}$--factor of $\mathcal{WBH}$ to the factor $\mathcal{B}_{m_k}$ of $\mathcal{H}(1)$ and identifying the segment $[b,c'(1)]$ with the segment $[m_k,c(1)]$ in $\mathcal{H}(1)$, as shown on Figure \ref{fig:WH}(right). 
  Having identified $\mathcal{WBH}^\circ$ with a subset of $\mathcal{WH}^\circ$ we may define the retraction $r_{\mathcal{WBH}}$ next. It will be done in two stages; first we retract all the broom factors of $\mathcal{H}(1)$, except $\mathcal{B}_{m_k}$ to the boundary circle of the interior disk $D^2$ in $\mathcal{H}(1)$ via the following obvious map
  \[
	z:\mathcal{WH}^\circ\longrightarrow\mathcal{WH}^\circ,\qquad z(x)=\begin{cases}
	m_i & x\in \mathcal{B}_{m_i}\subset \mathcal{H}'(1),\quad i\neq k\\
	x, & \text{otherwise}.
	\end{cases}
  \]
  Continuity of $z$ is a direct consequence of the ``hairy disk'' construction. Indeed given a convergent sequence of points $\{h_n\}$ in the complement of $\mathcal{B}_{m_k}$, i.e. $\{h_n\}\subset \mathcal{H}(1)-\mathcal{B}_{m_k}$. If the limit of $\{h_n\}$ is in $\mathcal{B}_{m_k}$ then it has to be $m_k$, proving continuity of $z$. For the second stage, in the construction of $r_{\mathcal{WBH}}$, we define a map $y:z(\mathcal{WH}^\circ)\longrightarrow \mathcal{WBH}^\circ$ on the image of $z$,
  simply by collapsing the interior disk $D^2\subset \mathcal{H}(1)$ onto the segment $[m_k, c(1)]$ in $\mathcal{H}(1)$. The required retraction  $r_{\mathcal{WBH}}$ can be now defined as  $r_{\mathcal{WBH}}=y\circ z$. 
 \begin{theorem}\label{thm:WH-WBH}
 Both pairs: $\mathcal{WH}^\circ$, $\mathcal{WBH}^\circ$ and  $\mathcal{WH}$, $\mathcal{WBH}$ are $h$--equal but not homotopy equivalent.
\end{theorem}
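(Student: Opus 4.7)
The plan splits naturally in two parts: establishing $h$-equality and ruling out homotopy equivalence. For the $h$-equality, the retractions $r_{\mathcal{WH}}$ and $r_{\mathcal{WBH}}$ constructed immediately before the statement already exhibit mutual homotopy domination, so $\mathcal{WH}^\circ =_h \mathcal{WBH}^\circ$. To pass to the compact versions, I would extend both maps by fixing the added origin; continuity at $(0,0,0)$ is automatic since $r_{\mathcal{WH}}$ and $r_{\mathcal{WBH}}$ alter only the first factor $\mathcal{H}(1)$ (respectively $\mathcal{H}'(1)$ together with the extra $\mathcal{B}$), while acting as the identity on all remaining factors whose diameters shrink to the origin.

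The serious content is $\mathcal{WH}^\circ \not\simeq \mathcal{WBH}^\circ$. The driving idea is that the center $b$ of the extra broom factor in $\mathcal{WBH}^\circ$ is a homotopically fixed point with a very specific local structure not realizable in $\mathcal{WH}^\circ$. First I would adapt Lemma \ref{lem:v-fixed-B} to the larger space $\mathcal{WBH}^\circ$: because both arms of the extra $\mathcal{B}$ accumulate to $b$ and the only egress from $b$ is along the arc $[b,c'(1)]$, any self-homotopy $f_t$ of the identity must keep the two accumulating sequences in their respective arms, and the uniform-continuity argument from the broom lemma then yields $b \in hf(\mathcal{WBH}^\circ)$.

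Assuming a homotopy equivalence $f: \mathcal{WBH}^\circ \to \mathcal{WH}^\circ$ with inverse $g$, the homotopy $g\circ f \simeq \text{id}$ must fix $b$ throughout, so $g(f(b))=b$; set $p=f(b)$. The remaining task, which is where the real obstacle lies, is to derive a contradiction from the existence of such $p \in \mathcal{WH}^\circ$. My plan is to case-split on the local topology at a candidate $p$: either (a) a neighborhood of $p$ contains a $2$-dimensional disk-piece from some $\mathcal{H}(k)$, (b) $p$ is an interior point of a connecting segment $[c(k),c(k+1)]$ or of some broom segment $J_x$, or (c) $p$ is itself an attaching center $m_i \in M(k)$ or a broom tip. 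In each case, combining $g(p)=b$ with $f\circ g \simeq \text{id}_{\mathcal{WH}^\circ}$ forces the local structure at $p$ to be compatible with the broom-plus-arc structure at $b$, and this fails: (a) is ruled out by comparing local separation after removing small neighborhoods (removing $b$ locally disconnects into two broom-arm pieces and a single arc component, which cannot happen at any point possessing $2$-dimensional local content); (b) is ruled out because interior arc-points can themselves be slid along the arc, violating the rigidity $g$ would have to transfer from $b$; and (c) is ruled out because $m_i$ carries other brooms densely attached to the disk boundary, giving a local topology incompatible with the isolated-arc neighborhood of $b$.

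The hardest piece is this final case analysis. Since the classical algebraic invariants all vanish on these continua, one really must reason pointwise about motion under homotopies of the identity and about where continuous maps can send the arms of brooms, in the spirit of \cite{Karimov-Repovs-Rosicki-Zastrow05} and \cite{Stewart58}. The compact pair $\mathcal{WH}, \mathcal{WBH}$ is then handled by the same argument: the added origin is common to both and is accumulated in the same way by the shrinking $\mathcal{H}(k)$, while the extra broom remains ``far'' from the origin, so the rigidity at $b$ persists in the compactified setting and the case analysis transfers verbatim.
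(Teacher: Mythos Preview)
Your plan correctly isolates $b\in hf(\mathcal{WBH}^\circ)$ and the consequence $g(f(b))=b$, but the proposed case analysis on $p=f(b)$ does not go through. The arguments in cases (a)--(c) all appeal to ``local topology'' or ``local separation'' at $p$ versus at $b$; however, homotopy equivalences need not respect such data. Concretely: in case (b), if $p$ lies on the interior of an arc and $k_t$ is a homotopy of $\text{id}_{\mathcal{WH}^\circ}$ sliding $p$, concatenating with $g\circ f\simeq\text{id}_{\mathcal{WBH}^\circ}$ only yields $g(k_t(p))=b$ for all $t$; this merely constrains $k_t(p)$ to the fiber $g^{-1}(b)$, which may well be an arc, so no contradiction arises. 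In case (c), nothing prevents $f$ from sending $b$ to a broom center $m_i\in M(k)$ while $g$ sends $m_i$ back to $b$; the fact that $m_i$ sits on a disk boundary with other brooms accumulating nearby is invisible to a mere homotopy equivalence. In short, from the single homotopically fixed point $b$ you cannot extract a contradiction by local comparison.

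The paper's argument is global. It first computes the \emph{entire} sets $hf(\mathcal{WH}^\circ)$ and $hf(\mathcal{WBH}^\circ)$ (Lemma~\ref{lem:hf(WH)-hf(WBH)}), obtaining a disjoint union of circles in the first case and the same union together with the isolated point $\{b\}$ in the second. The decisive step---and the real replacement for your case analysis---is Lemma~\ref{lem:g-f-hf}, which shows that any homotopy equivalence $f$ and its inverse $g$ carry $hf$-sets into $hf$-sets. Its proof is not a local-structure comparison but a sequence-tracing argument: one follows the images of the arm endpoints $a_n,b_n$ of a broom under $f$ and then under $g$, uses $g\circ f\simeq\text{id}$ to pin $g(f(a_n))$ and $g(f(b_n))$ back onto the original arms $J_{a_n},J_{b_n}$ (Observation~\eqref{eq:in-J}), and then rules out each possible non-$hf$ location of $f(v)$ by producing contradictory limits along shortest paths in the target broom. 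Once Lemma~\ref{lem:g-f-hf} is available, the restrictions $\tilde f,\tilde g$ to the $hf$-sets satisfy $\tilde g\circ\tilde f=\text{id}$ and $\tilde f\circ\tilde g=\text{id}$ \emph{literally} (not just up to homotopy, by the very definition of $hf$), hence are mutually inverse homeomorphisms---impossible, since exactly one of the two $hf$-sets has an isolated point.
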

\no The proof requires the following two lemmas.  

\begin{lemma}\label{lem:hf(WH)-hf(WBH)}
 We have the following homeomorphisms 
\begin{equation}\label{eq:hf(H_0)-hf(H_1)}
\begin{split}
   hf(\mathcal{WH}^\circ) & \cong \bigsqcup^\infty_{i=1} hf(\mathcal{H}(k)),\quad
   hf(\mathcal{WBH}^\circ) \cong \{b\}\sqcup \bigsqcup^\infty_{i=1} hf(\mathcal{H}'(k)),\\
   hf(\mathcal{WH}) & \cong hf(\mathcal{WH}^\circ) \sqcup \{(0,0,0)\},\quad
   hf(\mathcal{WBH}) \cong hf(\mathcal{WBH}^\circ) \sqcup \{(0,0,0)\},
   \end{split}
\end{equation}
where each $hf(\mathcal{H}(k))$ or $hf(\mathcal{H}(k))$ is just a boundary of the interior  disk $D^2$ in each factor $\mathcal{H}(k)$ of $\mathcal{WH}^\circ$ ($\mathcal{WH}$) or $\mathcal{H}'(k)$ in $\mathcal{WBH}^\circ$ ($\mathcal{WBH}$), and therefore homeomorphic to $S^1$. The topology is the subspace topology induced from $\R^3$ via the embeddings constructed in \eqref{eq:WH-WBH-o}. 
\end{lemma}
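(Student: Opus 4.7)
The plan is to establish the four homeomorphisms in \eqref{eq:hf(H_0)-hf(H_1)} in stages: first characterize $hf(\mathcal{H}(k))$ for a single hairy disk as the boundary circle $\partial D^2$ of the interior disk, then pass to the infinite wedges $\mathcal{WH}^\circ$ and $\mathcal{WBH}^\circ$, and finally incorporate the compactification point $(0,0,0)$. Since all sets involved are closed subsets of the ambient $\R^3$, the claimed homeomorphisms reduce to set-theoretic equalities once the topology is inherited from the embeddings in \eqref{eq:WH-WBH-o}.

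The key sublemma is the equality $hf(\mathcal{H}(k))=\partial D^2$. For the containment $\partial D^2 \subset hf(\mathcal{H}(k))$, I would first show that each $m_i\in M(k)$ is homotopically fixed. The broom $\mathcal{B}_{m_i}$ is attached to $\mathcal{H}(k)$ only at $m_i$, so near $m_i$ the space looks like $\mathcal{B}_{m_i}$ with a small disk sector glued in. Adapting the proof of Lemma \ref{lem:v-fixed-B}, pick sequences $u_n\in J_{a_n}$ in the $A$-arms and $w_n\in J_{b_n}$ in the $B$-arms of $\mathcal{B}_{m_i}$ with $u_n,w_n\to m_i$. For any homotopy $f_t:\mathcal{H}(k)\to\mathcal{H}(k)$ with $f_0=\text{id}$, the local broom structure forces $f_t(u_n)$ to stay on $J_{u_n}$ and $f_t(w_n)$ on $J_{w_n}$ for large $n$; passing to the limit, $f_t(m_i)$ must be a common accumulation point of both families of arms, which is only $m_i$ itself in $\mathcal{B}_{m_i}$. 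Because $hf(\mathcal{H}(k))$ is closed and $M(k)$ is dense in $\partial D^2$, the entire boundary circle is in $hf(\mathcal{H}(k))$. For the reverse inclusion, interior points of $D^2$ are moved by a small radial contraction, and points on a broom away from its center are moved by flowing along the containing arm, extended by the identity to the complement (cf.\ Corollary \ref{lem:hf(B)}).

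I would then extend this analysis to $\mathcal{WH}^\circ$ and $\mathcal{WBH}^\circ$. The broom-pinning argument is local and applies verbatim to each factor $\mathcal{H}(k)$, so all boundary circles remain pointwise fixed. The connecting arcs $[c(k),c(k+1)]$ (respectively $[b,c'(1)]$, $[c'(k),c'(k+1)]$) admit obvious sliding homotopies supported in their small neighborhoods, removing their interior points from $hf$, while each center $c(k)$, being interior to its disk, is movable along the adjacent arc. For $\mathcal{WBH}^\circ$ the extra broom contributes exactly $\{b\}$ to $hf$ by Corollary \ref{lem:hf(B)}. For the one-point compactifications $\mathcal{WH}$ and $\mathcal{WBH}$, the origin must be homotopically fixed: the boundary circles of $\mathcal{H}(k)$ have diameters tending to zero and accumulate precisely at $(0,0,0)$, and since every point of every such circle is pointwise fixed by any identity-based homotopy $f_t$, continuity forces $f_t(0,0,0)=(0,0,0)$. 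The main technical hurdle is the broom-pinning step itself: one must verify that in the ambient wedge a small neighborhood of $m_i$ decomposes into topologically isolated arms of $\mathcal{B}_{m_i}$, so that no "cross-arm" homotopy can occur, and that the disk attachment at $m_i$ does not open a new route of escape.
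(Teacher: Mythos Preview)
Your proposal is correct and follows essentially the same route as the paper: show each broom center $m_i\in M(k)$ is homotopically fixed via the sequences-on-arms argument of Lemma~\ref{lem:v-fixed-B}, close up by density of $M(k)$ in $\partial D^2$ and closedness of $hf$, dispose of the remaining points by explicit local homotopies, and finally pick up the point at infinity as a limit of already-fixed points. The paper's write-up is terser---it cites Corollary~\ref{lem:hf(B)} plus a footnote referring to \cite{Karimov-Repovs-Rosicki-Zastrow05} for why the disk does not give an escape route, and it does not spell out the arcs $[c(k),c(k+1)]$ or the centers $c(k)$---but the underlying argument is the same as yours.
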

\begin{corollary}\label{cor:hf(WH)-hf(WBH)}
	In particular, $hf(\mathcal{WH}^\circ)$ ($hf(\mathcal{WH})$) is not homeomorphic to $hf(\mathcal{WBH}^\circ)$ ($hf(\mathcal{WBH})$).
\end{corollary}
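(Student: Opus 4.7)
The plan is to distinguish $hf(\mathcal{WH}^\circ)$ from $hf(\mathcal{WBH}^\circ)$ by exhibiting a topological invariant, namely the existence of an isolated point, that holds for one but not the other. By Lemma \ref{lem:hf(WH)-hf(WBH)}, $hf(\mathcal{WBH}^\circ)$ contains the distinguished point $b$ (the center of the broom factor $\mathcal{B}$) in addition to the sequence of circles $hf(\mathcal{H}'(k))$ sitting inside each $\mathcal{H}'(k)$. The circles in the $\mathcal{H}'(k)$ factors are embedded at positions $c'(k)$ along the $x$-axis, while $b$ sits on the broom attached at position $c'(1)$ and connected to it by the segment $[b,c'(1)]$. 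Since the interior of this segment contributes no homotopically fixed points (any interior point can be flowed along the segment by an isotopy), a sufficiently small $\R^3$-ball around $b$ meets $hf(\mathcal{WBH}^\circ)$ in the singleton $\{b\}$. In the subspace topology inherited from $\R^3$, therefore, $b$ is an isolated point of $hf(\mathcal{WBH}^\circ)$.

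On the other hand, every point of $hf(\mathcal{WH}^\circ)$ lies on one of the circles $hf(\mathcal{H}(k))\cong S^1$, and each such circle carries its usual $S^1$-topology as an embedded subspace of $\R^3$. Each point on a circle has an open-arc neighborhood in that circle, and hence is not isolated in $hf(\mathcal{WH}^\circ)$. Because ``having an isolated point'' is preserved under homeomorphism, this already gives $hf(\mathcal{WH}^\circ)\not\cong hf(\mathcal{WBH}^\circ)$.

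For the compact case, I would repeat the analysis after the one-point compactifications. In $hf(\mathcal{WH})$, the added point $(0,0,0)$ is a limit of the shrinking circles $hf(\mathcal{H}(k))$, whose centers $c(k)=(-1/k,0,0)$ converge to the origin and whose diameters tend to $0$, so the origin is a limit point of $hf(\mathcal{WH})$, not an isolated one. By the same reasoning, the origin is not isolated in $hf(\mathcal{WBH})$ either. The point $b$, however, still sits near $c'(1)=(-1,0,0)$ and is separated from both the origin and all the circles, so it remains isolated in $hf(\mathcal{WBH})$. The identical invariant then separates the two compactifications.

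The only real subtlety, and the step I would write out most carefully, is confirming that $b$ stays isolated after all the accumulation onto the origin is accounted for; this reduces to the straightforward geometric fact that the segment $[b,c'(1)]$ together with the locations of the circles and the origin in $\R^3$ keeps $b$ at a positive distance from $hf(\mathcal{WBH})-\{b\}$. Everything else is a direct reading of Lemma \ref{lem:hf(WH)-hf(WBH)} together with the elementary observation that the presence of an isolated point is a homeomorphism invariant.
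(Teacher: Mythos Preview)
Your argument is correct and is exactly the observation the paper leaves implicit: Lemma~\ref{lem:hf(WH)-hf(WBH)} already presents $hf(\mathcal{WBH}^\circ)$ as a disjoint union $\{b\}\sqcup\bigsqcup_k S^1$ and $hf(\mathcal{WH}^\circ)$ as $\bigsqcup_k S^1$, so the presence of an isolated point (equivalently, a singleton connected component) distinguishes them. The paper states the corollary without a separate proof, treating it as immediate from the disjoint-union description; your write-up simply makes explicit why $b$ is genuinely isolated in the subspace topology, which is the right detail to check.
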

\begin{proof}[Proof of Lemma \ref{lem:hf(WH)-hf(WBH)}]
 Lemma \ref{lem:hf(B)} and the construction of the hairy disk $\mathcal{H}$ imply\footnote{It is easy to see that broom centers along $\mathcal{H}$--factors cannot be moved, by a homotopy, to the interior of the disk $D^2\subset\mathcal{H}$, c.f. \cite[$(v)$ on p. 288]{Karimov-Repovs-Rosicki-Zastrow05}.}
 \[
 M\subset hf(\mathcal{H}). 
 \]
  Since $M$ is dense in the boundary $S^1=\partial D^2\subset\mathcal{H}$, we obtain  
 \[
 S^1=\overline{M}\subset hf(\mathcal{H}).
 \]
  Since, none of the interior points in $D^2\subset \mathcal{H}$ is homotopically fixed, and by Lemma \ref{lem:hf(B)}, for each $\mathcal{B}_{m_k}$--factor of $\mathcal{H}$, $m_k$ is the only homotopically fixed point of $\mathcal{B}_{m_k}$, we conclude 
  \[
  hf(\mathcal{H})=S^1.
  \]
  It in turn implies equalities in \eqref{eq:hf(H_0)-hf(H_1)}, note that $\{(0,0,0)\}$ is homotopically fixed as a limit of  points in $hf(\mathcal{H}(k))$ fixed points from the $\mathcal{H}$--factors of 
$\mathcal{WH}$ or $\mathcal{WBH}$.
\end{proof}
\no Further, we obtain the following key lemma,
\begin{lemma}\label{lem:g-f-hf}
Let $f$ be the homotopy equivalence between $\mathcal{WH}^\circ$,  and $\mathcal{WBH}^\circ$, and $g$ its inverse. Then, 
\begin{equation}\label{eq:g-f-hf}
 f(hf(\mathcal{WH}^\circ))\subset hf(\mathcal{WBH}^\circ),\qquad g(hf(\mathcal{WBH}^\circ))\subset hf(\mathcal{WH}^\circ).
\end{equation}
The same inclusions holds for the compactifications: $\mathcal{WH}$ and $\mathcal{WBH}$.
\end{lemma}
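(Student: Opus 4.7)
The plan is to prove the inclusions $f(hf(\mathcal{WH}^\circ)) \subset hf(\mathcal{WBH}^\circ)$ and $g(hf(\mathcal{WBH}^\circ)) \subset hf(\mathcal{WH}^\circ)$ in parallel, by a symmetric argument; the compactified versions then follow by continuity of $f$ and $g$ together with the closedness of $hf$-sets, using that $(0,0,0)$ is the accumulation point of the $hf$-sets of the factors $\mathcal{H}(k)$ and $\mathcal{H}'(k)$ (by Lemma \ref{lem:hf(WH)-hf(WBH)}).

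The formal backbone is a concatenation trick. Fix $x \in hf(\mathcal{WH}^\circ)$ and an arbitrary homotopy $H_s$ of $\mathrm{id}_{\mathcal{WBH}^\circ}$. Choose a homotopy $\alpha$ from $\mathrm{id}_{\mathcal{WH}^\circ}$ to $g\circ f$, and concatenate with $s \mapsto g\circ H_s\circ f$ to form a homotopy $L$ of $\mathrm{id}_{\mathcal{WH}^\circ}$:
\[
L_t = \begin{cases} \alpha_{2t}, & t\in [0,1/2], \\ g\circ H_{2t-1}\circ f, & t\in [1/2, 1]. \end{cases}
\]
Applying Definition \ref{def:h.f.} to $L$ at $x$ gives $L_t(x)=x$ for all $t$, in particular the key identity
\[
g(H_s(f(x))) = x \qquad \text{for all } s\in[0,1].
\]
Hence $\eta(s) := H_s(f(x))$ is a continuous path in $\mathcal{WBH}^\circ$ starting at $f(x)$ and lying entirely in the fibre $g^{-1}(x)$. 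Symmetrically, for $y\in hf(\mathcal{WBH}^\circ)$ and any homotopy $K_s$ of $\mathrm{id}_{\mathcal{WH}^\circ}$, one obtains $f(K_s(g(y)))=y$; in particular $g\circ f(x)=x$ and $f\circ g(y)=y$.

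To upgrade the formal identity to $\eta \equiv f(x)$ (equivalently $f(x)\in hf(\mathcal{WBH}^\circ)$), I argue by contradiction. If $f(x)\notin hf(\mathcal{WBH}^\circ)$, by Lemma \ref{lem:hf(WH)-hf(WBH)} it lies in a ``movable'' region: the interior of a disk $D^2 \subset \mathcal{H}'(k)$, the interior of a broom arm, the interior of the distinguished broom $\mathcal{B}$ away from $b$, or a connecting segment. In each case, one produces a sequence of broom centers $p_n \in M'(k)\subset hf(\mathcal{WBH}^\circ)$ with $p_n \to f(x)$: directly via density of $M'(k)$ on $\partial D^2$ (and density of arm-tips along arms) in the boundary, arm, and segment cases; and, in the disk-interior case, after first using the explicit straight-line retraction of $D^2$ onto $\partial D^2$ to deform $\eta$ out to the boundary while preserving $g\circ \eta \equiv x$. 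By the symmetric identity, $f(g(p_n))=p_n \to f(x)$, so continuity of $g$ gives $g(p_n)\to x$. Combining this with the rigidity of broom centers (Lemma \ref{lem:v-fixed-B}) applied to the local brooms of $\mathcal{WBH}^\circ$ at $p_n$, and the disjoint-compact-circle structure of $hf(\mathcal{WH}^\circ)$ in a neighborhood of $x$, yields the desired contradiction. The main obstacle is the disk-interior case, since broom centers accumulate only on $\partial D^2$ and the preliminary deformation of $\eta$ (which must preserve the constraint $g\circ \eta \equiv x$) is required; carrying this out consistently across the wedge-sum connecting structure between distinct $\mathcal{H}'(k)$-factors is the technical heart of the proof.
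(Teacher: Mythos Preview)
Your concatenation trick is valid and gives a clean formal identity: for $x\in hf(\mathcal{WH}^\circ)$ and any homotopy $H$ of $\mathrm{id}_{\mathcal{WBH}^\circ}$ one has $g(H_s(f(x)))=x$, and symmetrically $f(g(y))=y$ for $y\in hf(\mathcal{WBH}^\circ)$.  After that, however, the argument breaks down.

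The central claim---that one can always find broom centers $p_n\in M'(k)$ with $p_n\to f(x)$---is simply false in the cases you list.  Broom centers lie only on the boundary circles $\partial D^2\subset\mathcal{H}'(k)$; they do not accumulate at interior disk points, at points on broom arms, or on connecting segments.  Your parenthetical ``density of arm-tips along arms'' is also wrong: the arm-tips $a_n$ accumulate only at the broom center $v$, and in any case they are not homotopically fixed.  The phrase ``straight-line retraction of $D^2$ onto $\partial D^2$'' names a map that does not exist.  If instead you mean a homotopy of $\mathrm{id}_{\mathcal{WBH}^\circ}$ pushing $f(x)$ radially to the boundary, then your identity does show $g$ is constant equal to $x$ along that push, and since the endpoint lies in $hf(\mathcal{WBH}^\circ)$ the symmetric identity $f\circ g=\mathrm{id}$ on $hf$ would force $f(x)$ to equal that endpoint---a genuine contradiction.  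But this only handles the disk-interior (and, by concatenation, the connecting-segment) case.  For a point on a broom arm, say at or near the junction $a_0'$ of some $\mathcal{B}'$, you would need a homotopy of the identity carrying it into $hf(\mathcal{WBH}^\circ)$; this is not obvious, you have not supplied it, and your final sentence invoking ``rigidity of broom centers'' and ``disjoint-compact-circle structure'' does not produce a contradiction from the data $g(p_n)\to x$, $f(g(p_n))=p_n$ alone.

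The paper's proof is organized quite differently and does not attempt to push $f(x)$ anywhere.  It works only at broom centers $v\in M(k)$ (dense in $hf(\mathcal{WH}^\circ)$) and exploits the local arm structure there: since $g\circ f\simeq\mathrm{id}$ is uniformly small near $v$, one gets $\tilde a_n:=g(f(a_n))\in J_{a_n}$ and $\tilde b_n\in J_{b_n}$ for large $n$.  The proof then splits on whether $\mathcal{WBH}^\circ$ is locally path connected at $u:=f(v)$.  If it is, a single path-connected neighborhood of $u$ maps under $g$ into a connected set containing $\tilde a_n$ on infinitely many distinct arms---impossible.  If it is not, then $u$ lies on the spine $J_{v'}\setminus\{v'\}$ of some broom $\mathcal{B}'$; the paper then builds shortest broken paths $\alpha_n,\beta_n$ in $\mathcal{B}'$ from $f(a_n),f(b_n)$ to $u$ through the junction $a_0'$, tracks the first points $s_n\in\alpha_n$, $t_n\in\beta_n$ with $g(s_n)=a_0$, $g(t_n)=b_0$, passes to limits $s\neq t$ on $J_{a_0'}$, and derives a contradiction from an ordering argument along the arms.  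This broom-arm case is exactly the one your outline does not handle, and it is where the real work lies.
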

\begin{proof}
We will prove the first inclusion in \eqref{eq:g-f-hf}, as the proof of the second is analogous. It suffices to prove for each $k$: 
\begin{equation}\label{eq:f(M(k))-in-hf}
 f(M(k))\subset hf(\mathcal{WBH}^\circ).
\end{equation}
 Then the claim follows from continuity of $f$, and the fact that the closure of $\bigcup_k M(k)$ in $\mathcal{WH}^\circ$ is equal to $hf(\mathcal{WH}^\circ)$ (see Lemma \ref{lem:hf(WH)-hf(WBH)}). (Note that for the second inclusion in \eqref{eq:g-f-hf}, the only difference is the point $b$ (the center of the first broom factor of 
$\mathcal{WBH}^\circ$) which needs to be added to the union $\bigcup_k M(k)$). 

To prove \eqref{eq:f(M(k))-in-hf}, consider a point $v$ in $M(k)$. By definition it has to be the center point of one of a broom factors in $\mathcal{H}(k)\subset \mathcal{WH}^\circ$, see \eqref{eq:WH-WBH-o}. We  further denote this factor by $\mathcal{B}$ (i.e. $v\in \mathcal{B}\subset \mathcal{H}(k)$). Let $u=f(v)$, and suppose by contradiction $u\not\in hf(\mathcal{WBH}^\circ)$, then 

either\ 1$^\circ$, $\mathcal{WBH}^\circ$ is locally path connected at $u$; 

or\ 2$^\circ$, $\mathcal{WBH}^\circ$ is not locally path connected at $u$. 
 
\no {\em Observation \eqref{eq:in-J}:} Since $v\in hf(\mathcal{WH})$, we must have $g\circ f(v)=v$ (as $g\circ f\simeq \text{id}_{\mathcal{WH}}$). Consider sequences $a_n\to v$, $b_n\to v$ of points in $\mathcal{B}$ (see Figure \ref{fig:brooms}). Denote by $\tilde{a}_n=g\circ f(a_n)$, $\tilde{b}_n=g\circ f(b_n)$, then we have $\tilde{a}_n\to v$ and $\tilde{b}_n\to v$. We claim that for large $n$: 
\begin{equation}\label{eq:in-J}
 \tilde{a}_n\in J_{a_n},\qquad \tilde{b}_n\in J_{b_n}.
\end{equation}
\begin{proof}
 Indeed, denoting the homotopy $g\circ f\simeq \text{id}_{\mathcal{WH}}$ by $h_t=h(t,\,\cdot\,)$, $h:I\times \mathcal{WH}\longrightarrow \mathcal{WH}$ we observe that for every $n$: 
$\gamma_{a_n}(t)=h_t(a_n)$ defines a path in $\mathcal{WH}$ connecting $a_n=\gamma_{a_n}(1)$ and $\tilde{a}_n=\gamma_{a_n}(0)=g\circ f(a_n)$ (analogously for the sequence $\{b_n\}$). Since for the limit point $v=\lim a_n$, $\gamma_v$ is a constant path, for a small $\varepsilon$--ball $B_v(\varepsilon)$ around $v$, the inverse image $h^{-1}(B_v(\varepsilon))\subset I\times \mathcal{WH}$ contains $I\times \{v\}$ and therefore some small neighborhood $I\times B_v(\delta)$ is also in $h^{-1}(B_v(\varepsilon))$.
For large enough $n$, $a_n$'s are in $B_v(\delta)$ and hence the paths $\gamma_{a_n}$ have image in $B_v(\varepsilon)$. It follows that each  $\gamma_{a_n}$ is contained in the connected component $J_{a_n}\cap B_v(\varepsilon)\subset \mathcal{B}$ of $B_v(\varepsilon)$. Hence, for small positive $\varepsilon$ we obtain $J_{a_k}\cap J_{a_j}\cap B_v(\varepsilon)=\emptyset$ and the first part of \eqref{eq:in-J}. The second part follows analogously. 
\end{proof}
\no Now we consider Case 1$^\circ$  and Case 2$^\circ$.

{\em Case 1$^\circ$}: Suppose $\mathcal{WBH}^\circ$ is locally path connected at $u=f(v)$. Choose a small path connected ball $B_u(\tilde{\varepsilon})$ around $u$, such that $f(B_v(\delta))\subset B_u(\tilde{\varepsilon})$ then  $g(B_u(\tilde{\varepsilon}))\subset B_v(\varepsilon)$ with $\delta$ and $\varepsilon$ chosen as in the proof of Observation \eqref{eq:in-J} above. Since $g(B_u(\tilde{\varepsilon}))$ is connected, and all $\{\tilde{a}_n\}$ for large $n$ are contained in  $g(B_u(\tilde{\varepsilon}))$, $\{\tilde{a}_n\}$ would have to belong entirely to one of the arms $J_{a_k}\cap B_v(\varepsilon)$ of $\mathcal{B}$. But, this leads to a contradiction with Observation \eqref{eq:in-J}.

{\em Case 2$^\circ$:} Suppose $\mathcal{WBH}^\circ$ is not locally connected at $u=f(v)$. Since $u\not\in hf(\mathcal{WBH}^\circ)$, 
$u$ belongs to one of the broom factors of $\mathcal{WBH}^\circ$, we denote by $\mathcal{B}'$ (i.e. $\mathcal{B}'$ is either the $\mathcal{B}$--factor of $\mathcal{WBH}^\circ$ or belongs to one of the $\mathcal{H}'(k)$--factors). 
We also endow $\mathcal{B}'$ with decorations of Figure \ref{fig:brooms}, where $v'$ stands for the center of $\mathcal{B}'$, and $a'_n$, $b'_n$ correspond to $a_n$ and $b_n$, etc. 
Note that the set of points where $\mathcal{B}'$ is not locally path connected is given by $V'=J_{v'}-(\{a'_0\}\cup \{b'_0\})$ and therefore 
$u\in V'$. Since $u\not\in hf(\mathcal{WBH}^\circ)$ and also $u\neq v'$, without loss of generality, we assume $u\in J_{a'_0}-\{a'_0\}$. 
Further, continuity of $f$ implies $f(a_n)\to u$ and $f(b_n)\to u$ and for large $n$, both sequences $\{f(a_n)\}$ and $\{f(b_n)\}$ belong to a small neighborhood $U$ of $u$ consisting of infinitely many disjoint segments accumulating on  $J_{v'}\cap U$ (see Figure \ref{fig:brooms} for the illustration). 
Consider the shortest piece-wise linear paths $\alpha_n:I\longrightarrow \mathcal{B}'$, joining $\alpha_n(0)=f(a_n)\in U$ and $\alpha_n(1)=u$; $\beta_n:I\longrightarrow \mathcal{B}'$, joining $f(b_n)\in U$ and $u$. 
Clearly, both $\alpha_n$ and $\beta_n$ trace segments respectively:
\[
\alpha_n=[f(a_n),a'_0]\cup [u, a'_0]\subset \mathcal{B}',\qquad \beta_n=[f(b_n),a'_0]\cup [u, a'_0]\subset \mathcal{B}',
\] 
(we identify $\alpha_n$ and $\beta_n$ with their images for simplicity). 
In turn, the paths $g\circ \alpha_n$ and $g\circ \beta_n$, join points $\tilde{a}_n=g(f(a_n))$ and $v=g(f(u))$, see Equation \eqref{eq:in-J}. 
Since points $\tilde{a}_n$ (resp. $\tilde{b}_n$) are close to $a_n$ (resp. $b_n$), and belong to $J_{a_n}$ (resp. $J_{b_n}$).
The image of $g\circ \alpha_n$ contains $[\tilde{a}_n,a_0]\cup [v,a_0]$ and the image of  $g\circ \beta_n$ contains segments $[\tilde{b}_n,b_0]\cup [v,b_0]$. Therefore, for  $n$ large enough, we can find $s_n\in \alpha_n$, and $t_n\in \beta_n$, such that 
\begin{equation}\label{eq:s_n-t_n-to}
g(s_n)=a_0,\qquad g(t_n)=b_0.
\end{equation}
Moreover, for each $n$ we can choose minimal such $s_n$ and $t_n$ (i.e. closest to the initial point of the paths $\alpha_n$ and $\beta_n$).  
Passing to subsequences, if necessary, we have $s_n\to s$, $t_n\to t$, and both limits belong to $J_{a'_0}$. Clearly,
\begin{equation}\label{eq:g(s)-g(t)}
g(s)=a_0,\qquad g(t)=b_0.
\end{equation}
By \eqref{eq:s_n-t_n-to}, we have  $s\neq t$, and both $s$ and $t$ are above $u$, i.e. $s>u$ and $t>u$, according to the order defined after Equation \eqref{eq:B-union}. 

Suppose $s > t > u$: the initial points $f(a_n)$ of $\alpha_n$  converge to $u$, and  $s_n\in \alpha_n$ (or a subsequence) converges to $s$ as $n\to\infty$.
Since, $s> t$ we can find points $\{e_n\}$ with $e_n\in \alpha_n$ and
\[
 f(a_n)\leq e_n\leq s_n,
\]
(as points ordered along $\alpha_n$) and such that $e_n\to t$, as a consequence $g(e_n)\to g(t)=b_0$. However, $s_n$, is the first point on $\alpha_n$ mapped to $a_0$ under $g$. Further $f(a_n)$,  the initial point of $\alpha_n$, is mapped to $\tilde{a}_n$. By \eqref{eq:in-J}, we conclude that $g(e_n)\in J_{a_n}$ for large enough $n$, and therefore the limit of $\{g(e_n)\}$ has to belong to $J_{a_0}$, contradicting the fact that $b_0\not\in J_{a_0}$. 

In the case $t> s> u$, analogously considering paths $\beta_n$, we may find a sequence of points $\{h_n\}$, converging to $s$, and such that for large $n$:
\[
 f(b_n)\leq h_n\leq t_n.
\]
Then, again points $g(h_n)$ can only accumulate on $J_{b_0}$, contradicting 
\[
g(h_n)\to g(s)=a_0\not\in  J_{b_0}.
\]
This proves \eqref{eq:f(M(k))-in-hf} and concludes the proof of \eqref{eq:g-f-hf} for $\mathcal{WH}^\circ$ and $\mathcal{WBH}^\circ$. To prove the inclusion for the compactifications $\mathcal{WH}$ and $\mathcal{WBH}$, we see that Claim \eqref{eq:f(M(k))-in-hf} follows because the point at $\infty$, i.e $\{(0,0,0)\}$, is in the closure of $\bigcup_k M(k)$.  
\end{proof}
%
%
\begin{proof}[Proof of Theorem \ref{thm:WH-WBH}]
Suppose that $\mathcal{WH}^\circ$ and  $\mathcal{WBH}^\circ$ are homotopy equivalent and take a homotopy equivalence $f:\mathcal{WH}^\circ\longrightarrow \mathcal{WBH}^\circ$, with inverse $g:\mathcal{WBH}^\circ\longrightarrow \mathcal{WH}^\circ$ (in the notation of Lemma \ref{lem:g-f-hf}.) Their restrictions to the sets of homotopically fixed points are defined by
\[
 \tilde{f}=f\bigl|_{hf(\mathcal{WH}^\circ)},\qquad \tilde{g}=g\bigl|_{hf(\mathcal{WBH}^\circ)}
\] 
By Lemma \ref{lem:g-f-hf}, compositions $\tilde{g}\circ\tilde{f}$ and $\tilde{f}\circ\tilde{g}$ are well defined and by Definition \ref{def:h.f.} they satisfy 
\[
 \tilde{g}\circ\tilde{f}=\text{id}_{hf(\mathcal{WH}^\circ)},\qquad  \tilde{f}\circ\tilde{g}=\text{id}_{hf(\mathcal{WBH}^\circ)}.
\]
Thus $\tilde{f}$, defines a homeomorphism between $hf(\mathcal{WH}^\circ)$ and $hf(\mathcal{WBH}^\circ)$, and $\tilde{g}$ its inverse, contradicting Lemma \ref{lem:hf(WH)-hf(WBH)}. For one-point compactifications:  $\mathcal{WH}$ and $\mathcal{WBH}$, points at infinity are homotopically fixed therefore the statement follows analogously.
\end{proof}
%
%
\subsection{Infinite wedges of products of \texorpdfstring{$n$}{n}--spheres}\label{sec:inf-wedges}
For arbitrarly high connected examples, we follow a similar pattern as in the previous section. Let $\mathcal{S}$ be an $n$--sphere, and $\mathcal{S}^2=\mathcal{S}\times \mathcal{S}$, define the following countable 
wedge products at a common basepoint $s$:
\begin{equation}\label{eq:S_0-S_1}
  \mathcal{S}_0 =\mathcal{S}^2\vee\mathcal{S}^2\vee\cdots\vee\mathcal{S}^2\vee\cdots=\bigvee^\infty_{j=1} \mathcal{S}_0(j),\quad 
 \mathcal{S}_1 =\mathcal{S}\vee \mathcal{S}_0=\bigvee^\infty_{j=1} \mathcal{S}_1(j).
\end{equation}
Thus $\mathcal{S}_0$ and $\mathcal{S}_1$ differ just by the first factor, further we consider both $\mathcal{S}_0$ and $\mathcal{S}_1$ to be metrically embedded in $\R^{2 n+2}$, with the basepoint at the origin, and the diameters of factors $\mathcal{S}_\ast(j)$ tending to zero as $j\to\infty$, see Figure \ref{fig:S1-S0}. Both $\mathcal{S}_0$ and $\mathcal{S}_1$ are compact in the topology induced from the embedding. Clearly, $\mathcal{S}_0$ and $\mathcal{S}_1$ are $(n-1)$--connected in this topology since each factor is $(n-1)$--connected. Further, let $\mathcal{S}_\ast$ stand for either $\mathcal{S}_0$ or $\mathcal{S}_1$.

 The obvious retraction $r_{\mathcal{S}}:\mathcal{S}\times \mathcal{S}\longrightarrow \mathcal{S}$, can be extended by the identity (and rescailing) to a retraction $r_{\mathcal{S}_1}:\mathcal{S}_0\longrightarrow \mathcal{S}_1$. A retraction $r_{\mathcal{S}_0}:\mathcal{S}_1\longrightarrow \mathcal{S}_0$ can be obtained by simply collapsing the $\mathcal{S}$--factor of $\mathcal{S}_1$ to the basepoint. Thus we obtain 
\[
\mathcal{S}_0=_h \mathcal{S}_1,\qquad \mathcal{S}_0=_h \mathcal{S}_1.
\]
%
%
\begin{figure}[!ht] 
  \centering
   \includegraphics[width=0.3\textwidth]{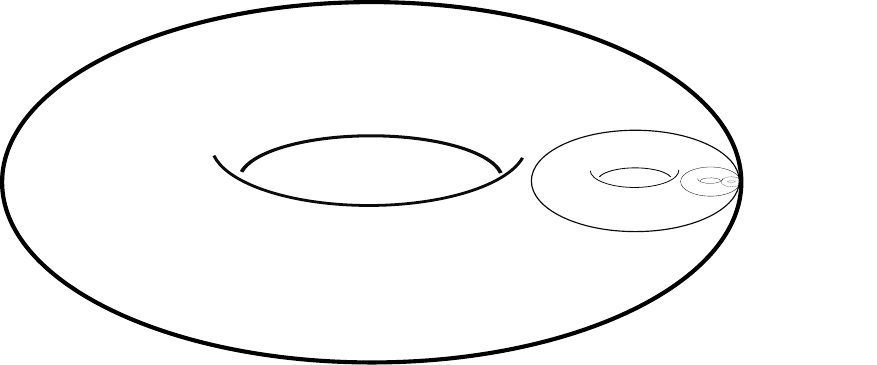}\quad \includegraphics[width=0.3\textwidth]{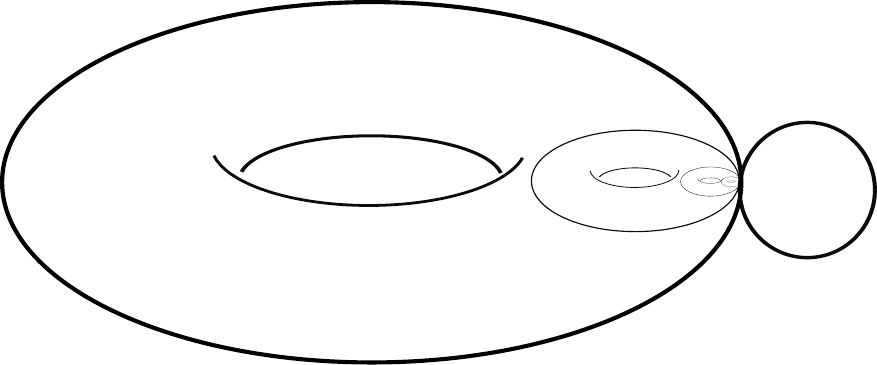}
  \caption{Hawaiian earrings $\mathcal{S}_0$(left) and $\mathcal{S}_1$(right) based on $\mathcal{S}\times \mathcal{S}$ and $\mathcal{S}$, for $n=1$.} \label{fig:S1-S0} 
\end{figure} 
The strategy for proving that $\mathcal{S}_0$ and $\mathcal{S}_1$ are not homotopy equivalent is a little different than before and based on 
some homological considerations. 

Both $\mathcal{S}_0$ and $\mathcal{S}_1$ are a special case of the generalized Hawaiian earrings construction considered in \cite{Eda-Kawamura00}.  Following \cite{Eda-Kawamura00} consider the following homomorphisms defined on $\pi_\ast(\mathcal{S}_\ast,s)$ in dimension $n$ (for $n>1$): 
\[
 h_0:\pi_n(\mathcal{S}_0,s)\longrightarrow \prod^\infty_{j=1} \pi_n(\mathcal{S}_0(j),s),\qquad h_1:\pi_n(\mathcal{S}_1,s)\longrightarrow \prod^\infty_{j=1} \pi_n(\mathcal{S}_1(j),s),
\]
and induced by the product of obvious coordinate retractions $r_{j,\ast}:\mathcal{S}_\ast\longrightarrow \mathcal{S}_\ast(j)$ onto each factor of $\mathcal{S}_\ast$. The main theorem of \cite[p. 18]{Eda-Kawamura00} implies that both $h_0$ and $h_1$ are isomorphisms. 
By the Hurewicz Theorem 
\begin{equation}\label{eq:H_n-S_ast}
\begin{split}
 H_n(\mathcal{S}_0) & \cong \pi_n(\mathcal{S}_0,s)\cong \prod^\infty_{j=1} H_n(\mathcal{S}_0(j);\Z)\cong \prod^\infty_{j=1} (\Z\times \Z)=(\Z\times \Z)^\omega\cong \Z^\omega,\\
 H_n(\mathcal{S}_1) & \cong \pi_n(\mathcal{S}_1,s)\cong H_n(\mathcal{S};\Z)\times \prod^\infty_{j=2} H_n(\mathcal{S}_1(j);\Z)\cong \Z\times (\Z\times \Z)^\omega\cong \Z^\omega.
\end{split}
\end{equation}
\no Since both $S_0$ and $S_1$ are $(n-1)$--connected, for $n>1$, the Universal Coefficients Theorem for cohomology and \eqref{eq:H_n-S_ast} implies additively
\begin{equation}\label{eq:H^n-S_ast}
 H^n(\mathcal{S}_\ast)\cong \text{Hom}(H_n(\mathcal{S}_\ast;\Z);\Z)\cong \bigoplus^\infty_{j=1} H^n(\mathcal{S}_\ast(j);\Z)\cong  \bigoplus^\infty_{k=1}\Z,
\end{equation}
(c.f. \cite[p. 67]{Nunke62} for the second isomorphism). 
\begin{theorem}\label{thm:S_0-S_1}
$\mathcal{S}_0$ and $\mathcal{S}_1$ are $h$--equal but not homotopy equivalent.
\end{theorem}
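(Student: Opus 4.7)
The $h$-equality is already established in the paragraph preceding the statement via the retractions $r_{\mathcal{S}_0}$ and $r_{\mathcal{S}_1}$, so the remaining task is to prove $\mathcal{S}_0\not\simeq\mathcal{S}_1$. Since \eqref{eq:H_n-S_ast}--\eqref{eq:H^n-S_ast} show that $H_n$ and $H^n$ agree additively for the two spaces, any purely additive invariant is useless and one must exploit the multiplicative structure. The plan is to distinguish the two via the \emph{cup-product radical}
\[
Z(X):=\bigl\{\,x\in H^n(X;\Z)\ :\ x\cup y=0\ \text{for every}\ y\in H^n(X;\Z)\,\bigr\},
\]
which is a subgroup of $H^n(X;\Z)$ whose isomorphism class depends only on the homotopy type of $X$: if $f:X\to Y$ is a homotopy equivalence then $f^\ast$ is a graded ring isomorphism, and surjectivity of $f^\ast$ in degree $n$ forces $f^\ast(Z(Y))\subseteq Z(X)$, hence $Z(X)\cong Z(Y)$.

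I will then show $Z(\mathcal{S}_1)\neq 0$ while $Z(\mathcal{S}_0)=0$. For $\mathcal{S}_1$, let $\tilde\gamma$ generate $H^n(\mathcal{S};\Z)$, set $\gamma:=r_1^\ast\tilde\gamma$, and observe that by \eqref{eq:H^n-S_ast} every $y\in H^n(\mathcal{S}_1;\Z)$ is a \emph{finite} sum $y=c\gamma+\sum_{j\geq 2}r_j^\ast y_j$; then $\gamma\cup\gamma=r_1^\ast(\tilde\gamma^2)=0$ because $H^{2n}(\mathcal{S})=0$, and each $\gamma\cup r_j^\ast y_j=0$ for $j\geq 2$ by the cross-vanishing lemma discussed below, so $\gamma\in Z(\mathcal{S}_1)\setminus\{0\}$. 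For $\mathcal{S}_0$, take any nonzero $x=\sum_j r_j^\ast x_j$ with $x_k=a\tilde\alpha_k+b\tilde\beta_k\neq 0$ (WLOG $a\neq 0$), where $\tilde\alpha_k,\tilde\beta_k$ are the two canonical generators of $H^n(\mathcal{S}_0(k);\Z)=H^n(S^n\times S^n;\Z)$. Setting $y:=r_k^\ast\tilde\beta_k$, cross-vanishing kills every summand indexed by $j\neq k$, and since $\tilde\beta_k^2=0$ one obtains $x\cup y=a\,r_k^\ast(\tilde\alpha_k\cup\tilde\beta_k)$. Because $\tilde\alpha_k\cup\tilde\beta_k$ generates $H^{2n}(\mathcal{S}_0(k);\Z)\cong\Z$ and $r_k^\ast$ is injective (from $i_k^\ast\circ r_k^\ast=\mathrm{id}$), this class is nonzero, proving $Z(\mathcal{S}_0)=0$ and contradicting $\mathcal{S}_0\simeq\mathcal{S}_1$.

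The crux is the \emph{cross-vanishing lemma}: $r_j^\ast a\cup r_k^\ast b=0$ in $H^\ast(\mathcal{S}_\ast;\Z)$ whenever $j\neq k$ and $a,b$ are of positive degree. The plan is to reduce to the classical finite-wedge fact through the quotient map $\rho_{jk}:\mathcal{S}_\ast\to\mathcal{S}_\ast(j)\vee\mathcal{S}_\ast(k)$ that collapses every other factor to the basepoint $s$. Continuity of $\rho_{jk}$ uses the fact that, in the subspace topology inherited from $\R^{2n+2}$, the diameters of the factors shrink to zero, so every neighborhood of $s$ contains all but finitely many of them, and $\rho_{jk}$ sends those cofinite neighborhoods into the basepoint of the target. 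With continuity in hand, $r_j=r_j'\circ\rho_{jk}$ and $r_k=r_k'\circ\rho_{jk}$ for the summand retractions $r_j',r_k'$ of the \emph{finite} wedge $\mathcal{S}_\ast(j)\vee\mathcal{S}_\ast(k)$, and the cross term $r_j'^{\,\ast} a\cup r_k'^{\,\ast} b$ already vanishes there by the standard ring splitting $\tilde H^\ast(X\vee Y)\cong \tilde H^\ast(X)\oplus\tilde H^\ast(Y)$ in positive degrees. The main obstacle I anticipate is this continuity/reduction step: it is not deep, but it must be handled carefully in the Hawaiian-earring topology, in the same spirit as the continuity arguments already used in Section \ref{sec:continua}.
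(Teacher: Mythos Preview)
Your proposal is correct and follows essentially the same approach as the paper. The paper's property $(\ast)$---``every nontrivial degree-$n$ class has a nonzero cup-product partner''---is exactly the statement $Z(X)=0$ in your language, and the paper verifies $(\ast)$ for $\mathcal{S}_0$ and its failure for $\mathcal{S}_1$ by the same witnesses you use (the class $w$ from the lone $\mathcal{S}$-factor, and a complementary generator $y_i$ in an $\mathcal{S}^2$-factor). The only organizational difference is that the paper handles your ``cross-vanishing'' by pulling everything back through the retraction $r:\mathcal{S}_\ast\to\mathcal{S}_\ast(1)\vee\cdots\vee\mathcal{S}_\ast(k)$ onto the first $k$ factors and computing in that finite wedge, whereas you factor through the two-factor collapse $\rho_{jk}$; both are the same continuous quotient-to-a-finite-wedge maneuver, and the continuity concern you flag is exactly the point that makes either map well defined.
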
 
\begin{proof}
The $h$--equality has been already argued at the beginning of this section. For the second claim, first we note that the graded ring structures of each factor $H^\ast(\mathcal{S}_\ast(j);\Z)$ are well known, i.e.
\begin{equation}\label{eq:H^ast-factors}
\begin{split}
 H^\ast(\mathcal{S}_0(j)) & \cong \Z[x_j,y_j]\Bigl/\langle x^2_j=0, y^2_j=0\rangle,\\
 H^\ast(\mathcal{S}_1(1)) & \cong \Z[w]\Bigl/\langle w^2=0\rangle,\ H^\ast(\mathcal{S}_1(k))  \cong \Z[x_k,y_k]\Bigl/\langle x^2_k=0, y^2_k=0\rangle,\ k>1,
\end{split}
\end{equation}
where $x_i$, $y_i$ and $w$ are of degree $n$. Observe that the graded ring $H^\ast(\mathcal{S}_0)$ has the following property: 
\begin{quote}
$(\ast)$ {\em For any nontrival  $p$ in $H^n(\mathcal{S}_0)$  there exists $q$  in $H^n(\mathcal{S}_0)$ such that $p\cdot q\neq 0$.}
\end{quote}
\no Indeed, from \eqref{eq:H^n-S_ast} any $p\in H^n(\mathcal{S}_0)$ is given as 
\[
 p=\sum^\infty_{i=1} (a_i x_i+b_i y_i),\qquad a_i,b_i\in \Z,
\]
where only finitely many $a_i$'s and $b_i$'s are nonzero. Let $r:\mathcal{S}_0\longrightarrow \mathcal{S}_0(1)\vee\cdots \vee \mathcal{S}_0(k)$ be a retraction on $k$ first factors of $\mathcal{S}_0$, and $r^\ast:H^\ast(\bigvee^k_{l=1} \mathcal{S}_0(l))\longrightarrow H^\ast(\mathcal{S}_0)$ the induced monomorphism. Choosing $k$ large enough, and using the same symbols for the generators of $H^n(\bigvee^k_{l=1} \mathcal{S}_0(l))$ as in \eqref{eq:H^ast-factors}, we have
\[
 p=r^\ast(p')=r^\ast(\sum^k_{i=1} (a_i x_i+b_i y_i)), 
\]
for some $p'\in H^n(\bigvee^k_{l=1} \mathcal{S}_0(l))$. Note that $H^\ast(\bigvee^k_{l=1} \mathcal{S}_0(l))\cong \bigoplus^k_{l=1} H^\ast(\mathcal{S}_0(l))$ as graded rings.
Let $q'=y_i$, for $i$ such that $a_i\neq 0$, then $p'\cdot q'=a_i x_i y_i\neq 0$  and we obtain
\[
p\cdot q=r^\ast(p'\cdot q')\neq 0,
\]
by the injectivity of $r^\ast$. Clearly, the property $(\ast)$ is preserved under the graded ring isomorphisms.
 Note that for $H^\ast(\mathcal{S}_1)$ the property $(\ast)$ does not hold.
Indeed, let $p=w\in H^n(\mathcal{S}_1)$, which generates the cohomology of the $\mathcal{S}$--factor of $\mathcal{S}_1$, \eqref{eq:H^ast-factors}. By \eqref{eq:H^n-S_ast}, any $q\in H^n(\mathcal{S}_1)$ is represented by
\[
 q=c_0 w+\sum^\infty_{i=1} (a_i x_i+b_i y_i),\qquad c_0,a_i,b_i\in \Z,
\]
where only finitely many $a_i$'s and $b_i$'s are nonzero. Again, choosing an appropriate retraction $r$ of $\mathcal{S}_1$ onto finitely many factors,  we have $p=r^\ast(p')$, $q=r^\ast(q')$ for some $p'$ and $q'$ in $H^n(\bigvee^k_{l=1} \mathcal{S}_1(l))$, and therefore 
\[
 p\cdot q=r^\ast(p'\cdot q')=r^\ast(w\cdot q')=0.
\]
We conclude, as graded rings 
\[
 H^\ast(\mathcal{S}_0)\not\cong H^\ast(\mathcal{S}_1),
\]
and consequently $\mathcal{S}_0$ and $\mathcal{S}_1$ cannot be homotopy equivalent. 
\end{proof} 

 Modifying slightly the construction of $\mathcal{S}_0$ and $\mathcal{S}_1$, one may consider an infinite compact bouquets $\mathcal{WS}_0$ and $\mathcal{WS}_1$ embedded in $\R^{2n+2}$ as pictured on Figure \ref{fig:S1-S0-long}, having the same factors as $\mathcal{S}_0$ and $\mathcal{S}_1$. We claim that $\mathcal{WS}_0$ and $\mathcal{WS}_1$ are homotopy equivalent to their respective counterparts $\mathcal{S}_0$ and $\mathcal{S}_1$. For this purpose, let us consider just the case of  $\mathcal{WS}_0$ and $\mathcal{S}_0$  (for $\mathcal{WS}_1$ and $\mathcal{S}_1$ the claim follows analogously).
 \begin{figure}[!ht] 
 	\centering
 	\includegraphics[width=0.45\textwidth]{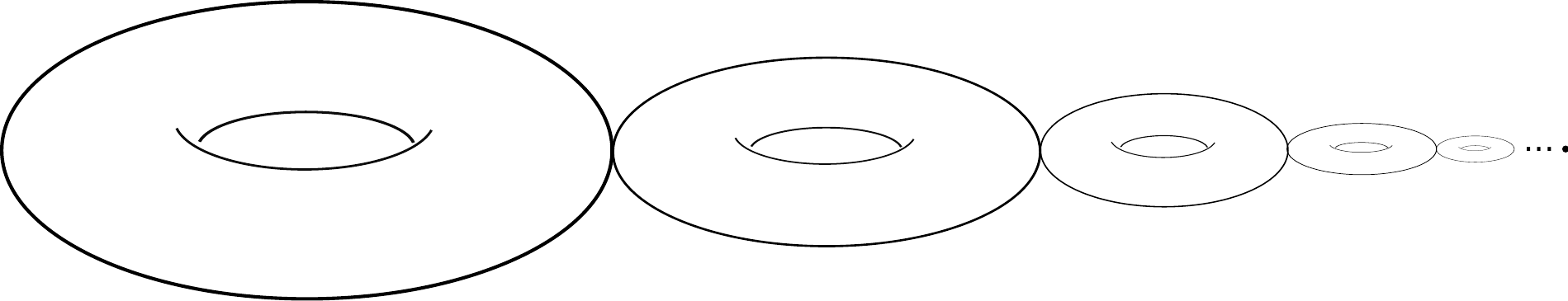}\qquad \includegraphics[width=0.45\textwidth]{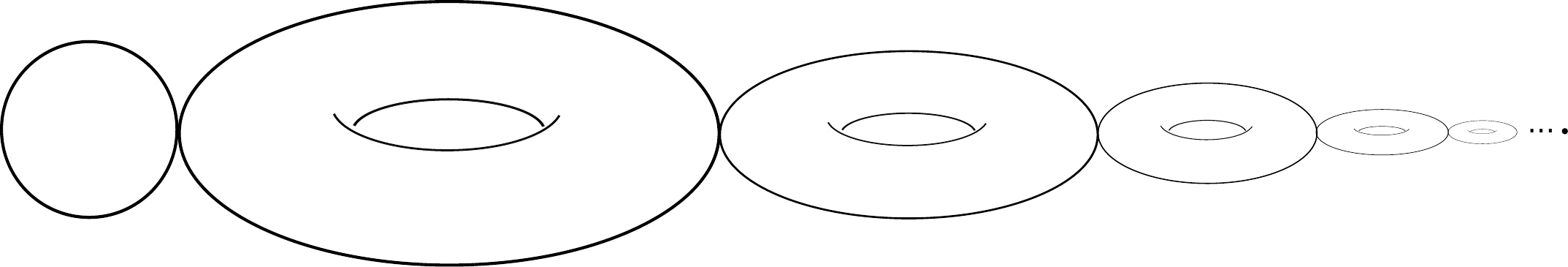}
 	\caption{Bouquets $\mathcal{WS}_0$(left) and $\mathcal{WS}_1$(right), for $n=1$.} \label{fig:S1-S0-long} 
 \end{figure}  
For simplicity, we just consider the case $\mathcal{S}$ is a circle ($n=1$), then all spaces can be embedded in $\R^3$ as shown\footnote{Note that Figure \ref{fig:S1-S0} shows a different embedding of $\mathcal{S}_0$ in $\R^3$.} on Figure \ref{fig:S0-WS0-BS0-homotopy}. The following construction can be conducted for any $n$, without major changes. Before we describe the intermediate space $\mathcal{MS}_0$ of Figure \ref{fig:S0-WS0-BS0-homotopy}, let us define the map 
\begin{equation}\label{eq:Q-map}
Q:\mathcal{WS}_0\longrightarrow \mathcal{S}_0,
\end{equation}
which is the required homotopy equivalence.  
 Denote by $a_i\subset \mathcal{WS}_0$ arcs connecting successive wedge points in $\mathcal{WS}_0$, and  $A=\bigcup_i a_i$ their union. The map $Q$ is the quotient projection identifying $A$ with the wedge point $x_0$ of $\mathcal{S}_0$. 

Next, let us define an intermediate space $\mathcal{MS}_0$. It is built from a contractible broom $\mathcal{M}=\bigcup^\infty_{i=0} c_i$ of a carefully chosen sequence of segments $\{c_i\}^\infty_{i=0}$ in $\R^3$, all having the common right endpoint, i.e. the wedge point $x_0$ of $\mathcal{M}$. At the left endpoints of $c_i$'s we attach $\mathcal{S}^2$--factors, as shown on Figure \ref{fig:S0-WS0-BS0-homotopy}. 
 We assume that the sequence $\{c_i\}^\infty_{i=0}$ starts with a horizontal segment $c_0=[-1,0]$ contained in the $x$--axis and all $c_i$ are placed above $c_0$ within the $xz$--plane of $\R^3$. 
As $i\to \infty$, we require the lengths of $c_i$'s, and the diameters of $\mathcal{S}^2$--factors tend to zero. 
As a result, $c_i$'s and the $\mathcal{S}^2$--factors of $\mathcal{MS}_0$ are converging to the point $x_0\in \mathcal{MS}_0$ in the limit. The map 
 \begin{equation}\label{eq:C-map}
 C:\mathcal{S}_0\longrightarrow \mathcal{MS}_0,
 \end{equation}
in Figure \ref{fig:S0-WS0-BS0-homotopy} is defined as follows; let $c_i$ be a small contractible neighborhood of $x_0$ in the $i$th $\mathcal{S}^2$--factor of $\mathcal{S}_0$. The map $C$ collapses each $c_i$ to the corresponding segment in $\mathcal{MS}_0$, also called $c_i$. Clearly, $\mathcal{S}_0$ and $\mathcal{MS}_0$ are homotopy equivalent under $C$, the inverse homotopy equivalence simply contracts the broom $\mathcal{M}$ to the wedge point $x_0$ in $\mathcal{MS}_0$.
%
%
%
\begin{figure}[!ht] 
	\centering
	\begin{overpic}[width=.97\textwidth]{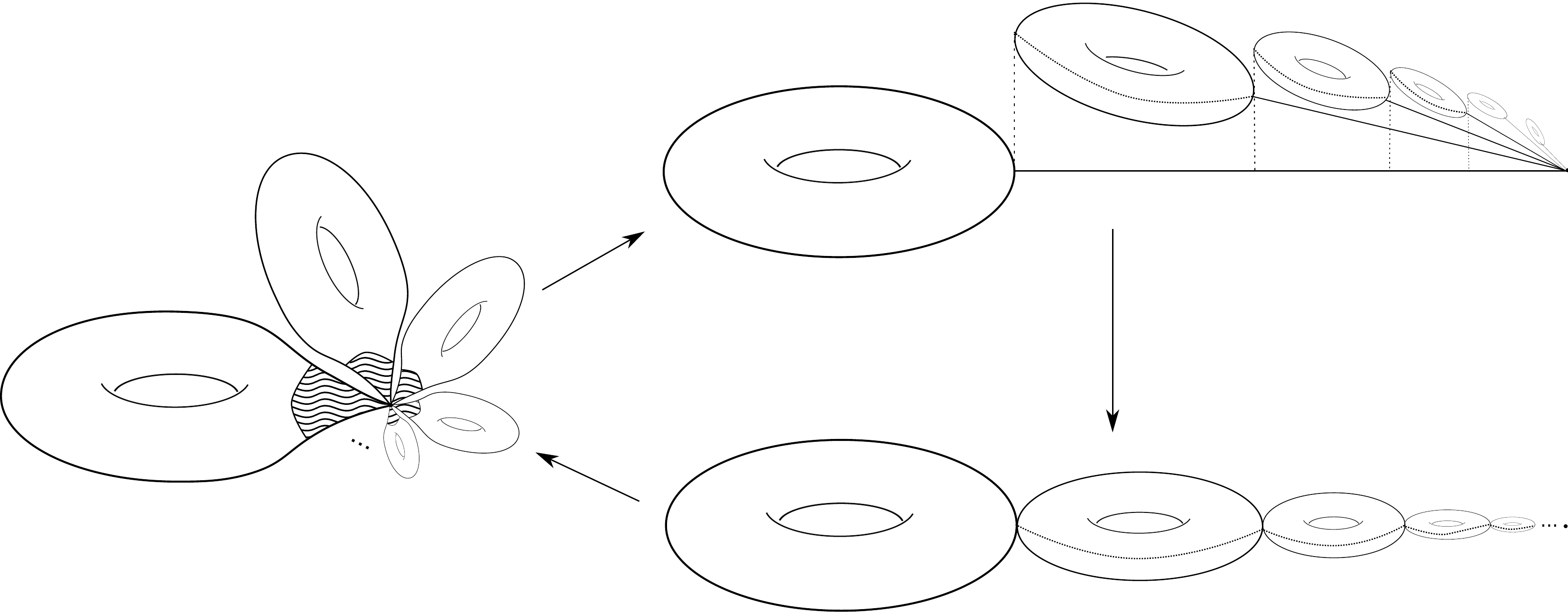}
		\put(12,22){$\mathcal{S}_0$}
		\put(80,10){$\mathcal{WS}_0$}
		\put(56,36){$\mathcal{MS}_0$}
		
		\put(16.5,13){$c_0$}
		\put(22.5,17.8){{\small $c_1$}}
		\put(26.5,16){{\tiny $c_2$}}
		\put(74,4){\tiny{$a_1$}}
		\put(87,4.7){{\tiny $a_2$}}
		\put(95,4){{\tiny $a_3$ \ldots}}		
		\put(38,10){$Q$}
		\put(38,20){$C$}
		\put(73,17){$P$}
		\put(75,26){$c_0$}
		\put(72,33){{\tiny $a_1$}}
		\put(86,29.5){{\tiny $c_1$}}
		\put(85,33){{\tiny $a_2$}}		
		\put(90,30.5){{\tiny $c_2$}}
		\put(100,27){{\tiny $x_0$}}
		\put(100,5.7){{\tiny $x_0$}}				
		\put(22.5,11.3){{\tiny $x_0$}}
	\end{overpic} 
	\caption{Homotopy equivalences $P$, $Q$ and $C$ on spaces $\mathcal{S}_0$, $\mathcal{WS}_0$ and $\mathcal{MS}_0$.} \label{fig:S0-WS0-BS0-homotopy} 
\end{figure}  
\no Now, let  $\mathcal{M}'$ be an extension of the broom $\mathcal{M}$ in $\mathcal{MS}_0$, given by 
\[
\mathcal{M}'=c_0\cup (a_1\cup c_1)\cup (a_2\cup c_2)\cup \ldots = c_0\cup \bigcup^\infty_{i=1}\, (a_i\cup c_i), 
\]
where each $c_i$, for $i>0$, is extended by adjoining an arc $a_i$ to the left endpoint of $c_i$. In order to define the map
\begin{equation}\label{eq:P-map}
P:\mathcal{MS}_0\longrightarrow\mathcal{WS}_0,
\end{equation}
of Figure \ref{fig:S0-WS0-BS0-homotopy}, first, consider a retraction $p:\mathcal{M}'\longrightarrow c_0$ of $\mathcal{M}'$ onto the $c_0$ segment of $\mathcal{M}'$.  The arcs $a_i\cup c_i$ can be arranged appropriately, so that  $p$ can be induced by restriction of the projection $(x,y,z)\longrightarrow (x,0,0)$ in $\R^3$ to $\mathcal{M}'$, mapping each $a_i\cup c_i$ of the broom onto a portion of $c_0$. Clearly,  $p$ defines a deformation retraction\footnote{via e.g. a piecewise linear homotopy}  of $\mathcal{M}'$ onto $c_0$. The map $P$  identifies the segment $c_0\subset \mathcal{MS}_0$ with the arc $A=\bigcup_i a_i$ in $\mathcal{WS}_0$. The map $P$ is equal to $p$ along $\mathcal{M}'\subset \mathcal{MS}_0$, and identifies the $\mathcal{S}^2$--factors of $\mathcal{MS}_0$ with the corresponding factors in $\mathcal{WS}_0$. 
  
\begin{proposition}
	The Hawaiian earrings $\mathcal{S}_0$ and $\mathcal{S}_1$ of Figure \ref{fig:S1-S0} are homotopy equivalent to infinite wedges $\mathcal{WS}_0$ and $\mathcal{WS}_1$ of Figure \ref{fig:S1-S0-long}.
\end{proposition}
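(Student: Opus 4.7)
The plan is to factor the homotopy equivalence $\mathcal{S}_0\simeq \mathcal{WS}_0$ through the intermediate space $\mathcal{MS}_0$, showing separately that the maps $C:\mathcal{S}_0\longrightarrow \mathcal{MS}_0$ and $P:\mathcal{MS}_0\longrightarrow \mathcal{WS}_0$ introduced in the setup are each homotopy equivalences. Their composition $P\circ C:\mathcal{S}_0\longrightarrow \mathcal{WS}_0$ then furnishes the required homotopy equivalence. The parallel case $\mathcal{WS}_1\simeq \mathcal{S}_1$ is handled identically, with the extra $\mathcal{S}$-factor of $\mathcal{S}_1$ playing the role of one more factor throughout (and one more segment adjoined to the broom in the corresponding intermediate space). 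As noted in the construction, working with $n=1$ involves no loss of generality, since the geometric picture extends verbatim to arbitrary $n$.

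To verify that $C$ is a homotopy equivalence I would produce the explicit inverse $D:\mathcal{MS}_0\longrightarrow \mathcal{S}_0$ that contracts the broom $\mathcal{M}=\bigcup_i c_i$ onto the wedge point $x_0$ and is the identity on each $\mathcal{S}^2$-factor (attached at the left endpoint of $c_i$, which is sent to $x_0$). The composition $D\circ C$ sends each small arc $c_i\subset \mathcal{S}_0$ to $x_0$ and is the identity elsewhere, so it is homotopic to $\text{id}_{\mathcal{S}_0}$ via the straight-line homotopy within each $\mathcal{S}^2$-factor of $\mathcal{S}_0$ that retracts $c_i$ back to $x_0$. The composition $C\circ D$ first collapses $\mathcal{M}\subset \mathcal{MS}_0$ to $x_0$ and then re-inserts the segments $c_i$; a linear ``grow the broom'' homotopy along each $c_i$ yields $C\circ D\simeq \text{id}_{\mathcal{MS}_0}$.

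To verify that $P$ is a homotopy equivalence I would define $E:\mathcal{WS}_0\longrightarrow \mathcal{MS}_0$ by sending the compact arc $A=\bigcup_i a_i$ homeomorphically onto $c_0$ (reversing the identification already used in the definition of $P$) and sending each $\mathcal{S}^2$-factor of $\mathcal{WS}_0$ to the corresponding $\mathcal{S}^2$-factor of $\mathcal{MS}_0$. Then $P\circ E=\text{id}_{\mathcal{WS}_0}$ by construction, while $E\circ P$ differs from $\text{id}_{\mathcal{MS}_0}$ only on $\mathcal{M}'=c_0\cup\bigcup_{i\geq 1}(a_i\cup c_i)$, where $E\circ P$ agrees with the projection $p:\mathcal{M}'\longrightarrow c_0$ composed with the inclusion $c_0\hookrightarrow \mathcal{M}'$; the piecewise linear deformation retraction of $\mathcal{M}'$ onto $c_0$ already alluded to in the construction, extended by the identity on each $\mathcal{S}^2$-factor of $\mathcal{MS}_0$, then provides $E\circ P\simeq \text{id}_{\mathcal{MS}_0}$.

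The main obstacle, common to both steps, is continuity of the various maps and homotopies at the accumulation point $x_0$. Because the diameters of the segments $c_i$, $a_i$ and of the $\mathcal{S}^2$-factors all tend to zero as $i\to\infty$, the trace of each homotopy on the $i$th factor is contained in a shrinking neighborhood of $x_0$, so the homotopies extend over $x_0$ by the constant value $x_0$. This reduces continuity at $(t,x_0)$ to a direct $\varepsilon$--$\delta$ check exploiting precisely the diameter-decay built into the metric definitions of $\mathcal{S}_\ast$ and $\mathcal{WS}_\ast$ in Section \ref{sec:inf-wedges} and of $\mathcal{MS}_0$ in the construction above; once this limit-point continuity is verified, the rest of the argument is routine.
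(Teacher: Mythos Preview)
Your overall plan is the paper's: factor through $\mathcal{MS}_0$ via $C$ and $P$ and show the composite $P\circ C$ is a homotopy equivalence. Your treatment of $C$ (with inverse $D$ contracting the broom) is exactly what the paper says in the sentence ``Clearly, $\mathcal{S}_0$ and $\mathcal{MS}_0$ are homotopy equivalent under $C$\ldots''. The difference is in how $P$ is handled, and there your argument has a genuine gap.

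Your map $E$ is not well-defined. In $\mathcal{WS}_0$ the $i$th $\mathcal{S}^2$--factor is attached at a wedge point $w_i\in A$; since you send $A$ homeomorphically onto $c_0$, you force $E(w_i)\in c_0$. But in $\mathcal{MS}_0$ the $i$th $\mathcal{S}^2$--factor is attached at the \emph{left endpoint of $c_i$}, which for $i\geq 1$ does not lie on $c_0$ (the segments $c_i$ meet $c_0$ only at $x_0$). So $E$ cannot simultaneously send $A$ onto $c_0$ and send the $i$th factor onto the $i$th factor while remaining continuous at $w_i$. In particular, there is no continuous $E$ with $P\circ E=\text{id}_{\mathcal{WS}_0}$ on the nose. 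This is repairable---for instance, stretch a short arc of the $i$th factor near $w_i$ over the path in $\mathcal{MS}_0$ from $E(w_i)$ through $x_0$ to the left endpoint of $c_i$---but then you only get $P\circ E\simeq\text{id}$, and the continuity at $x_0$ needs the same diameter--decay check you flag at the end.

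The paper avoids constructing any inverse for $P$ separately. It sets $\widehat{Q}=P\circ C$ and verifies directly that $\widehat{Q}$ is a homotopy inverse to the collapse $Q:\mathcal{WS}_0\to\mathcal{S}_0$: the composite $Q\circ\widehat{Q}$ crushes only the contractible set $C^{-1}(\mathcal{M})\subset\mathcal{S}_0$ to $x_0$, and $\widehat{Q}\circ Q$ crushes contractible neighborhoods $U(a_i)\cup g_i$ to the arc $A$, so both composites are homotopic to the identity. This packaging sidesteps the attachment--point mismatch that trips up your $E$.
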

\begin{proof}[Sketch of Proof]
	Having the maps $C$ and $P$ defined, we set $\widehat{Q}=P\circ C$, and claim that it defines a homotopy inverse of $Q$. Indeed, the composition $Q\circ\widehat{Q}:\mathcal{S}_0\longrightarrow \mathcal{S}_0$ maps the inverse image $C^{-1}(\mathcal{M})$ to the point $x_0$, and is an embedding on $\mathcal{S}_0-C^{-1}(\mathcal{M})$. The set $C^{-1}(\mathcal{M})$ is the union of $c_i\subset \mathcal{S}_0$ and arcs $C^{-1}(a_i)$, thus $C^{-1}(\mathcal{M})$ can be continuously contracted to the point $x_0\in \mathcal{S}_0$, implying $Q\circ\widehat{Q}\simeq \text{id}_{\mathcal{S}_0}$. The composite $\widehat{Q}\circ Q:\mathcal{WS}_0\longrightarrow\mathcal{WS}_0$, maps small contractible neighborhoods $U(a_i)$ of arcs $a_i$ in $\mathcal{WS}_0$ together with paths\footnote{$Q$ is an embedding away from the arc $A$} $g_i=Q^{-1}(C^{-1}(a_i))$ to the arc $A=\bigcup_i a_i$. Both $A$ and the union of subsets $U(a_i)\cup g_i$ are contractible within $\mathcal{WS}_0$, implying again $\widehat{Q}\circ Q\simeq \text{id}_{\mathcal{WS}_0}$.
\end{proof}
\begin{remark}
Observe that, contrary to $\mathcal{S}_\ast$, the complement of the infinity point in $\mathcal{WS}_\ast$, denoted by  $\mathcal{WS}^\circ_\ast$ is an $(n-1)$--connected, locally contractible, ANR. Note that if $\mathcal{WS}_\ast$ were locally $2n$--connected, it would imply it was an ANR. In this sense, $\mathcal{WS}_0$ and $\mathcal{WS}_1$ are ``close'' to being ANR spaces. In \cite{Stewart58}, Stewart shows  $\mathcal{WS}^\circ_0$ and $\mathcal{WS}^\circ_1$ for $n=1$ are examples of noncompact  $h$--equal ANRs, which are not homotopy equivalent. The proof of \cite{Stewart58}, relies heavily on the non-triviality of the fundamental group $\mathcal{S}_0$ and $\mathcal{S}_1$ and on the structure of isomorphisms between free products of groups, \cite{Stewart58}. We wish to point out that the computation of cohomology rings $H^\ast(\mathcal{WS}^\circ_0)$, $H^\ast(\mathcal{WS}^\circ_1)$ is basic and the argument of Theorem \ref{thm:S_0-S_1} can be easily adapted to show $H^\ast(\mathcal{WS}^\circ_0)\not\cong H^\ast(\mathcal{WS}^\circ_1)$, which implies  $\mathcal{WS}^\circ_0\not\simeq \mathcal{WS}^\circ_1$. Moreover, the argument is valid for all $n\geq 1$, giving $2n$--dimensional examples which are $(n-1)$--connected for $n\geq 1$. 

 In examples, from the last two sections, homotopy dominations are given by retractions. Consequently, these are examples of $r$--equal continua, which are not homotopy equivalent (c.f. \cite{Borsuk67}). 
\end{remark}

%
%

\end{document}